\documentclass[a4paper,12pt]{amsart}

\pdfoutput=1 

\usepackage[top=3cm,bottom=2.5cm,outer=3cm,inner=2cm,marginpar=2.45cm]{geometry}
\usepackage[destlabel,final,colorlinks=true,urlcolor=RoyalBlue]{hyperref}
\usepackage[abbrev]{amsrefs}

\usepackage[french,ngerman,english]{babel}
\usepackage[utf8]{inputenc}
\usepackage[T1]{fontenc}

\babeltags{de = ngerman}
\babeltags{fr = french}

\usepackage{etoolbox} 

\usepackage[all,pdf]{xy}

\renewcommand{\objectstyle}{\displaystyle}

\usepackage[inline]{enumitem}
\usepackage{mathtools}  
\usepackage[pdftex,usenames,dvipsnames]{xcolor}
\usepackage{calc} 

\usepackage{upref}  
\usepackage{embrac} 

\usepackage[
  show-mario,  
]{marionotations}

\usepackage{bbm}     

\usepackage[Smaller]{cancel} 

\usepackage{breakurl}  

\usepackage{subfiles}

\newcommand{\defaultDimension}{n}

\newcommand{\defaultAmbientSpace}{X}

\newcommand{\defaultlcIndex}{\sigma}

\newcommand{\defaultcohDegree}{q}

\newcommand{\defaultlclocus}{S}

\newcommand{\defaultvphi}{\vphi_L}
\newcommand{\setDefaultvphi}[1]{\renewcommand{\defaultvphi}{#1}}

\newcommand{\defaultpsi}{\psi}

\newcommand{\defaultMetric}{\omega}
\newcommand{\setDefaultMetric}[1]{\renewcommand{\defaultMetric}{#1}}

\newcommand{\defaultMap}{\pi}

\DeclareMathAlphabet{\mathbfkurier}{OT1}{kurier}{b}{it}

\newcommand{\alert}[2][RoyalBlue]{{\color{#1}#2}}

\NewDocumentCommand{\logKX}{
  D<>{\defaultAmbientSpace}  
  t{.}                       
  o                          
}{K_{#1} \IfBooleanT{#2}{\otimes \defaultlclocus} \IfNoValueF{#3}{\otimes #3}}

\NewDocumentCommand{\Ltwo}{ 
  t{.}                      
  D//{\bullet,\bullet}      
  D<>{\defaultAmbientSpace} 
  s                         
  m                         
}{L^{#2}_{(2)\IfBooleanT{#1}{\:\text{loc}}}\paren{\IfBooleanF{#4}{#3;} #5}}

\newcommand{\HarmSym}{\mathcal{H}}
\NewDocumentCommand{\Harm}{ 
  t{'}                      
  D//{\defaultcohDegree}    
  D<>{\defaultAmbientSpace} 
  g                         
  t{,}                      
  G{\defaultvphi}           
  e{_}                      
}{\HarmSym^{\IfBooleanT{#1}{\defaultDimension,}#2}\IfNoValueF{#4}{\paren{#3;#4}}_{#6 \IfNoValueF{#7}{,#7}}}

\newcommand{\deltaH}{\delta_{\HarmSym}}

\NewDocumentCommand{\lcIndex}{ 
  m  
  m  
  m  
}{#1\IfNoValueF{#2}{+#2}\IfNoValueF{#3}{-#3}}

\NewDocumentCommand{\lcData}{ 
  G{\defaultvphi}  
  O{\defaultpsi}   
  e{.}             
}{\paren{#1; \IfNoValueF{#3}{#3 \cdot} #2}}

\NewDocumentCommand{\lcdata}{ 
  s                
  d<>              
  G{\defaultvphi}  
  O{\defaultpsi}   
  e{.,}            
}{{\newcommand{\datalist}{\IfNoValueF{#2}{#2,}#3,#4\IfNoValueF{#5}{,#5}\IfNoValueF{#6}{,#6}}
\IfBooleanTF{#1}{\datalist}{\paren{\datalist}}}}

\NewDocumentCommand{\drR}{ 
  m              
  O{\defaultMap} 
  g              
  s              
}{R^{#1}{#2}_*\IfNoValueF{#3}{\IfBooleanTF{#4}{#3}{\paren{#3}}}}

\newcommand{\spHsym}{\mathbb{H}}
\NewDocumentCommand{\spH}{ 
  O{\spHsym}              
  D//{\defaultcohDegree}  
  t{M}                    
  m                       
}{#1^{#2}\paren{\IfBooleanT{#3}{M\otimes}#4}}

\def\spHarm{\spH[\HarmSym]}

\DeclareMathOperator{\lc}{lc} 
\NewDocumentCommand{\lcc}{ 
  D||{\defaultlcIndex}       
  e{+-}                      
  D<>{\IfBooleanF{#7}{\defaultAmbientSpace}}  
  t{'}                       
  D(){\defaultlclocus}       
  s                          
}{\lc_{#4}^{\lcIndex{#1}{#2}{#3}}\IfBooleanF{#7}{\IfBooleanTF{#5}{\paren{#6}}{\lcData}}}

\NewDocumentCommand{\lcS}{  
  s                       
  D(){\defaultlclocus}    
  D||{\defaultlcIndex}    
  e{+-}                   
  d<>                     
  O{p}                    
  t{'}                    
}{\IfBooleanT{#8}{\mathring}{\mathtt{\IfBooleanT{#1}{\rs} #2}}^{\lcIndex{#3}{#4}{#5}}_{\IfNoValueF{#6}{#6,}#7}}

\NewDocumentCommand{\lcZ}{o}{\mathtt Z\IfNoValueF{#1}{^{#1}}}

\NewDocumentCommand{\PRes}{ 
  O{}      
  d()      
}{\mathcal R_{#1}\IfNoValueF{#2}{\paren{#2}}}

\NewDocumentCommand{\HRes}{ 
  d()   
}{\mathfrak{R}\IfNoValueF{#1}{\paren{#1}}}

\newcommand{\defidlof}[1]{\mathcal{I}_{#1}}  
\NewDocumentCommand{\mtidlof}{   
  O{}      
  D<>{#1}  
  m        
}{\multidl_{#2}\paren{#3}} 

\NewDocumentCommand{\residlof}{  
  D||{\defaultlcIndex}   
  e{+-}                  
  d<>                    
  s                      
}{\sheaf R_{\IfNoValueTF{#4}{}{#4,} \lcIndex{#1}{#2}{#3}}\IfBooleanF{#5}{\lcData}}

\NewDocumentCommand{\Adjidlof}{
  D||{\defaultlcIndex}       
  D<>{\defaultAmbientSpace}  
  D(){\defaultlclocus}       
  m                          
}{\operatorname{\mathit{Adj}}^{#1}_{\paren{#2,#3}}\paren{#4}}

\NewDocumentCommand{\aidlof}{
  D||{\defaultlcIndex}   
  e{+-}                  
  d<>                    
  s                      
  e{',;}                 
}{\sheaf{J}_{\!\IfNoValueF{#4}{#4,} \lcIndex{#1}{#2}{#3}
    \IfNoValueF{#7}{, #7}\IfNoValueF{#8}{; #8}} \IfNoValueF{#6}{^{#6}} \IfBooleanF{#5}{\lcData}}

\NewDocumentCommand{\faidlof}{
  D||{\defaultlcIndex}   
  e{+-}                  
  t{/}                   
  D||{\defaultlcIndex}   
  e{+-}                  
}{\fracAidlof{\lcIndex{#1}{#2}{#3}}{\lcIndex{#5}{#6}{#7}}}

\NewDocumentCommand{\fracAidlof}{
  m                  
  m                  
  d<>                
  s                  
  e{',}              
  G{\defaultvphi}    
  O{\defaultpsi}     
  e{.}               
}{\frac{
    \aidlof|#1|<#3>* 
    \IfBooleanF{#4}{\lcData{#7}[#8].{#9}}
  }{
    \aidlof|#2|<#3>* 
    \IfBooleanF{#4}{\lcData{#7}[#8].{#9}}
  }}

\RenewDocumentCommand{\Res}{
  D||{\defaultlcIndex} 
  e{+-}                
  e{^}                 
}{\operatorname{Res}^{\IfNoValueTF{#4}{\lcIndex{#1}{#2}{#3}}{#4}}}

\NewDocumentCommand{\lcV}{ 
  D||{\defaultlcIndex}    
  D//{\defaultvphi}       
  d()                     
  e{^}                    
  O{\defaultpsi}          
}{\:d\operatorname{lcv}^{#1\IfNoValueF{#4}{,\paren{#4}}}_{\IfNoValueF{#3}{#3,}#2}\left[#5\right]}

\NewDocumentCommand{\Ohvol}{ 
  D//{\defaultvphi} 
  d()               
  O{\defaultpsi}    
}{\dvol_{\IfNoValueF{#2}{#2,}#1}\left[#3\right]}

\newcommand{\dvol}{\:d\vol}

\newcommand{\RTFsym}{\mathfrak{F}} 
\NewDocumentCommand{\RTF}{ 
  s          
  G{\RTFsym} 
  o          
  >{\SplitArgument{1}{,}} d<> 
  d||        
  D(){\eps}  
  t{,}       
}{%
  \begingroup%
    \newif\ifsmasht%
    \IfBooleanTF{#1}{\smashttrue}{\smashtfalse}%
    \newif\ifboolup%
    \booluptrue%
    \IfNoValueT{#3}{\IfNoValueT{#4}{\IfNoValueT{#5}{\boolupfalse}}}%
    \newcommand{\supsrptstr}{\IfNoValueF{#3}{#3}\IfNoValueF{#4}{\inner#4}\IfNoValueF{#5}{\abs{#5}^2}}
    \newcommand{\RTFvar}{#6}
    #2\RTFprocess
}

\NewDocumentCommand{\RTFprocess}{
  o                     
  d<>                   
  t{,}                  
  G{}                   
  D||{\defaultlcIndex}  
  e{+-}                 
  t{.}                  
}{\newcommand{\subsrptstr}{%
    \IfNoValueTF{#1}{
    \IfNoValueF{#2}{#2,}
    \IfBooleanT{#3}{\lcdata*#4\relax,}
    \lcIndex{#5}{#6}{#7}}{#1}}%
  \newcommand{\srptstr}{\cramped{{}^{\supsrptstr}%
      \ifboolup _
      \fi{\ifboolup\displaystyle\fi\paren{\RTFvar}%
          \ifboolup {\scriptstyle \subsrptstr} \else _{\subsrptstr} \fi%
        }}}%
  \ifboolup%
    \ifsmasht%
      \smash[t]{
        \raisebox{\depthof{$\srptstr$} * \real{0.3}}{$\srptstr$}%
      }%
    \else%
      \raisebox{\depthof{$\srptstr$} * \real{0.3}}{$\srptstr$}%
    \fi%
  \else%
    \srptstr%
  \fi%
  \endgroup%
}

\NewDocumentCommand{\mtlog}{O{e} d() D||{\defaultpsi}}{\log\!#1^{\paren{#2}}\abs{#3}}
\NewDocumentCommand{\slog}{O{e} D||{\defaultpsi}}{\log\abs{#1 #2}}
\NewDocumentCommand{\dlog}{O{e} D||{\defaultpsi}}{\mtlog[#1](2)|#2|}

\NewDocumentCommand{\logpole}{ 
  D||{\defaultpsi}       
  t{,}                   
  D||{\defaultlcIndex}   
  e{+-}                  
  E{.^}{{e}{1+\eps}}     
  s                      
}{\abs{#1}^{\lcIndex{#3}{#4}{#5}} \IfBooleanTF{#8}{\slog[#6]|#1|}{\paren{\slog[#6]|#1|}^{#7}}}

\DeclareFontFamily{OMX}{MnSymbolE}{}
\DeclareSymbolFont{MnLargeSymbols}{OMX}{MnSymbolE}{m}{n}
\SetSymbolFont{MnLargeSymbols}{bold}{OMX}{MnSymbolE}{b}{n}
\DeclareFontShape{OMX}{MnSymbolE}{m}{n}{
    <-6>  MnSymbolE5
   <6-7>  MnSymbolE6
   <7-8>  MnSymbolE7
   <8-9>  MnSymbolE8
   <9-10> MnSymbolE9
  <10-12> MnSymbolE10
  <12->   MnSymbolE12
}{}
\DeclareFontShape{OMX}{MnSymbolE}{b}{n}{
    <-6>  MnSymbolE-Bold5
   <6-7>  MnSymbolE-Bold6
   <7-8>  MnSymbolE-Bold7
   <8-9>  MnSymbolE-Bold8
   <9-10> MnSymbolE-Bold9
  <10-12> MnSymbolE-Bold10
  <12->   MnSymbolE-Bold12
}{}
\DeclareMathDelimiter{\llangle}{\mathopen}%
{MnLargeSymbols}{'164}{MnLargeSymbols}{'164}
\DeclareMathDelimiter{\rrangle}{\mathclose}%
{MnLargeSymbols}{'171}{MnLargeSymbols}{'171}

\newcommand{\iinner}[2]{\left\llangle#1,#2\right\rrangle}

\NewDocumentCommand{\idxup}{ 
  m                  
  O{\defaultMetric}  
  t{,}               
  o                  
  s                  
  t{.}               
}{\paren{#1}^{
    \!\IfBooleanTF{#5}{\smash[t]{#2}}{#2}\IfNoValueF{#4}{, #4}
  }\IfBooleanT{#6}{\!\!\ctrt}}

\newcommand{\dbadj}{\dbar^{\smash{\mathrlap{*}\;\:}}}

\NewDocumentCommand{\sm}{s m}{{#2}\IfBooleanTF{#1}{_}{^}\text{sm}}

\NewDocumentCommand{\idx}{ 
  O{i} 
  m    
  o    
  t{.} 
  t{,} 
  o    
  m    
}{{#1}_{#2} \IfNoValueF{#3}{#3}
  \IfBooleanT{#4}{\dotsm} \IfBooleanT{#5}{,\dots,}
  \IfNoValueF{#6}{#6} {#1}_{#7}}

\newcommand{\defaultPscript}{p}

\NewDocumentCommand{\ps}{ 
  O{\nu_{\IfNoValueTF{#4}{#3}{#4}}} 
  t{.}                              
  E{_^}{{\defaultPscript}}          
}{\IfNoValueTF{#4}{_{\IfBooleanT{#2}{#1 \cdot}(#3)}}{^{\IfBooleanT{#2}{#1 \cdot}(#4)}}}

\DeclareFontFamily{U} {MnSymbolC}{}
\DeclareSymbolFont{MnSyC} {U} {MnSymbolC}{m}{n}
\SetSymbolFont{MnSyC} {bold}{U} {MnSymbolC}{b}{n}
\DeclareFontShape{U}{MnSymbolC}{m}{n}{
  <-6> MnSymbolC5
  <6-7> MnSymbolC6
  <7-8> MnSymbolC7
  <8-9> MnSymbolC8
  <9-10> MnSymbolC9
  <10-12> MnSymbolC10
  <12-> MnSymbolC12}{}
\DeclareFontShape{U}{MnSymbolC}{b}{n}{
  <-6> MnSymbolC-Bold5
  <6-7> MnSymbolC-Bold6
  <7-8> MnSymbolC-Bold7
  <8-9> MnSymbolC-Bold8
  <9-10> MnSymbolC-Bold9
  <10-12> MnSymbolC-Bold10
  <12-> MnSymbolC-Bold12}{}
\DeclareMathSymbol{\smallstar}{\mathbin}{MnSyC}{"80}
\DeclareMathSymbol{\medstar}{\mathbin}{MnSyC}{"82}
\DeclareMathSymbol{\largestar}{\mathbin}{MnSyC}{"83}

\newcommand{\charfct}{\mathbbm 1}
\NewDocumentCommand{\lelong}{m O{x}}{\operatorname{\boldsymbol{\nu}}\paren{#1,#2}}

\newcommand{\cvr}[1]{\mathfrak{#1}} 
\NewDocumentCommand{\rs}{ 
  s  
  m  
}{\IfBooleanTF{#1}{\smash[t]{\widetilde{#2}}}{\widetilde{#2}}}

\NewDocumentCommand{\wh}{ 
  s  
  m  
}{\IfBooleanTF{#1}{\smash[t]{\widehat{#2}}}{\widehat{#2}}}

\DeclareMathOperator{\Ann}{Ann}  
\DeclareMathOperator{\mlc}{mlc} 
\newcommand{\Diff}{\operatorname{Diff}^*} 

\newcommand{\sect}[1][s]{\mathtt{#1}} 
\newcommand{\bphi}{\boldsymbol{\vphi}}

\NewDocumentCommand{\cbn}{  
  D//{\defaultlcIndex_V}
  D||{\defaultlcIndex}
}{\mathfrak{C}^{#1}_{#2}} 
\NewDocumentCommand{\Iset}{  
  D||{\defaultlcIndex}    
  e{+-}                   
  O{\defaultlclocus}      
}{I^{\lcIndex{#1}{#2}{#3}}_{#4} 
} 

\ifcsname defineNoThmInMarionotations\endcsname
  \relax
\else 

  \newtheorem{THMprop}{Proposition}[subsection]
  \newtheorem{THMlemma}[THMprop]{Lemma}
  \newtheorem{THMthm}[THMprop]{Theorem}
  \newtheorem{THMcor}[THMprop]{Corollary}

  \newtheorem{THMconjecture}[THMprop]{Conjecture}
  \newtheorem*{THMclaim}{Claim}

  \def\makeparenother{\catcode`\(=12 \catcode`\)=12 }
  \def\makeparenactive{\catcode`\(=\active\catcode`\)=\active}

  \makeparenactive
  \NewDocumentEnvironment{textupparenenvir}{}{

    \everymath\expandafter{\makeparenother}
    \everydisplay\expandafter{\makeparenother}

    \def({\textup{\char`\(}}
    \def){\textup{\char`\)}}

    \makeparenactive
  }{\makeparenother}
  \makeparenother

  \NewDocumentEnvironment{prop}{ +o }{
    \IfNoValueTF{#1}{\begin{THMprop}}{\begin{THMprop}[{#1}]}
      \begin{textupparenenvir}
  }{
      \end{textupparenenvir}
    \end{THMprop}
  }

  \NewDocumentEnvironment{lemma}{ +o }{
    \IfNoValueTF{#1}{\begin{THMlemma}}{\begin{THMlemma}[{#1}]}
      \begin{textupparenenvir}
  }{
      \end{textupparenenvir}
    \end{THMlemma}
  }

  \NewDocumentEnvironment{thm}{ +o }{
    \IfNoValueTF{#1}{\begin{THMthm}}{\begin{THMthm}[{#1}]}
      \begin{textupparenenvir}
  }{
      \end{textupparenenvir}
    \end{THMthm}
  }

  \NewDocumentEnvironment{cor}{ +o }{
    \IfNoValueTF{#1}{\begin{THMcor}}{\begin{THMcor}[{#1}]}
      \begin{textupparenenvir}
  }{
      \end{textupparenenvir}
    \end{THMcor}
  }

  \NewDocumentEnvironment{conjecture}{ +o }{
    \IfNoValueTF{#1}{\begin{THMconjecture}}{\begin{THMconjecture}[{#1}]}
      \begin{textupparenenvir}
  }{
      \end{textupparenenvir}
    \end{THMconjecture}
  }

  \NewDocumentEnvironment{claim}{ +o }{
    \IfNoValueTF{#1}{\begin{THMclaim}}{\begin{THMclaim}[{#1}]}
      \begin{textupparenenvir}
  }{
      \end{textupparenenvir}
    \end{THMclaim}
  }

  \theoremstyle{remark}
  \newtheorem{remark}[THMprop]{Remark}

  \theoremstyle{definition}

  \newtheorem{notation}[THMprop]{Notation}

  \numberwithin{equation}{subsection}

  \patchcmd{\subsubsection}{\normalfont}{\normalfont\color{blue}}{}{}

\fi

\allowdisplaybreaks  

\begin{document}

\newcommand{\titlestr}{%
  An extension theorem in terms of adjoint ideal sheaves%
}

\newcommand{\shorttitlestr}{%
  An extension theorem in terms of adjoint ideal sheaves%
}

\newcommand{\MCname}{Tsz On Mario Chan}
\newcommand{\MCnameshort}{Mario Chan}
\newcommand{\MCemail}{mariochan@ntu.edu.tw}

\newcommand{\addressstr}{%
  Dept.~of Mathematics,
  National Taiwan University, Taiwan
}

\newcommand{\subjclassstr}[1][,]{%
  32J25 (primary)#1  
  32Q15#1            
  14B05 (secondary)
}

\newcommand{\keywordstr}[1][,]{%
  adjoint ideal sheaf#1
  multiplier ideal sheaf#1
  lc centre%
}

\newcommand{\dedicatorystr}{%
}

\newcommand{\thankstr}{%
  This work was partly supported by the National Research Foundation (NRF) of
  Korea grant funded by the Korea government (No.~2023R1A2C1007227)%
}


\title[\shorttitlestr]{\titlestr}
 
\author[\MCnameshort]{\MCname}
\email{\MCemail}
\address{\addressstr}


\thanks{\thankstr}
 
\subjclass[2020]{\subjclassstr}

\keywords{\keywordstr}


\begin{abstract}

In the context of the study of the Ohsawa--Takegoshi $L^2$ extension
theorem, a ``qualitative'' extension theorem in terms of adjoint ideal
sheaves (i.e.~extension result without $L^2$ estimates but with some
form of local $L^2$ condition ensured) is proved on compact Kähler
manifolds via harmonic theory and residue computations.
The arguments are adapted from those in the study of the injectivity
theorem by Chan--Choi--Matsumura.
The result guarantees, in particular, the existence of extensions of
line-bundle-valued holomorphic top forms over the union of
any log-canonical centres of the same codimension to top forms over
the ambient space under suitable positivity assumptions. 


\end{abstract} 

\date{February 16, 2025 (last updated: \today)}

\maketitle



\subsection{Introduction}
\label{sec:intro}













The goal of this note is to improve the ``qualitative'' extension
result in terms of adjoint ideal sheaves given in
\cite{Chan&Choi_injectivity-proceedings}*{Sec.~3}, which states that,
for certain holomorphic line-bundle-valued top forms on some union of
log-canonical (lc) centres (possibly of high codimension), under
suitable (weak) positivity assumption on the line bundle, they can be
holomorphically extended to some line-bundle-valued top forms over the
ambient compact Kähler manifold which take values in some adjoint
ideal sheaves (i.e.~some local $L^2$ condition), even though the
extensions come without $L^2$ estimates.
As in the statement in \cite{Chan&Choi_injectivity-proceedings}*{Sec.~3},
the proof of the extension result here follows the strategy of the
(second) proof of the extension result given in 
the work of Cao, Demailly and Matsumura
\cite{Cao&Demailly&Matsumura}*{Sec.~3}, which makes use of the
arguments in the proof of the injectivity theorem (see
\cites{Matsumura_injectivity-lc, Chan&Choi_injectivity-I,
  Chan&Choi&Matsumura_injectivity}).

Such result can be considered as a variant of the celebrated
Ohsawa--Takegoshi $L^2$ extension theorem \cite{Ohsawa&Takegoshi-I},
which is of fundamental importance in several complex variables and
complex geometry, as reflected from the enormous amount of researches
(see \citelist{
  \cite{Berndtsson_OT-extension}
  \cite{Berndtsson_ext-form}
  \cite{Berndtsson&Lempert}
  \cite{Blocki_Suita-conj}
  \cite{Cao&Paun_OT-ext}
  \cite{Cao&Demailly&Matsumura}
  \cite{Chan_on-L2-ext-with-lc-measures}
  \cite{Chan&Choi_ext-with-lcv-codim-1}
  \cite{Demailly_on_OTM-extension}
  \cite{Demailly_extension}
  \cite{DHP}
  \cite{Guan&Zhou_optimal-L2-estimate}
  \cite{Guan&Zhou_effective_openness}
  \cite{Guan&Zhou&Zhu_OTExt}
  \cite{KimDano_lc-extension}
  \cite{KimDano-L2-ext-for-lc}
  \cite{Manivel}
  \cite{McNeal&Varolin_adjunction}
  \cite{McNeal&Varolin_L2-estimates}
  \cite{Ohsawa-II}
  \cite{Ohsawa-III}
  \cite{Ohsawa&Takegoshi-I}
  \cite{Paun_inv-plurigenera}
  \cite{Siu_inv_plurigenera2}
}, which is by no means exhaustive) devoted to its generalisations in
different directions in order to fit in various applications.

Adjoint ideal sheaf arises from the comparison of
$\res{\mtidlof<X>{\vphi}}_S$, the restriction of a multiplier ideal
sheaf of some plurisubharmonic (psh) function $\vphi$ on a complex
manifold $X$ to a subvariety $S$, with $\mtidlof<S>{\vphi}$, the
multiplier ideal sheaf of the restricted data $\res\vphi_S$ on $S$
(see \cite{Lazarsfeld_book-II}*{\S 9.3.E} and
\cite{Takayama_adj-ideal}*{Prop.~2.4}, or
\cite{Chan_adjoint-ideal-nas}*{Sec.~1.1}).
The comparison is described via the \emph{residue exact sequence}
\eqref{eq:residue-seq}. 
It is therefore natural to consider adjoint ideal sheaves when one
tries to extend holomorphic sections from a subspace to an ambient
space with $L^2$ conditions.
It turns out that adjoint ideal sheaves provide a sheaf-theoretic
means to describe the finer structure of the non-integrable locus
given by a multiplier ideal sheaf (as seen from the filtration
\eqref{eq:aidl_filtration} and its description of lc centres in
Section \ref{sec:notation-in-snc}).
While there are already versions of the Ohsawa--Takegoshi extension
theorem that can extend holomorphic sections over a high-codimensional
subvariety \citelist{\cite{Manivel}\cite{Demailly_on_OTM-extension}},
or an lc centre \cite{KimDano_lc-extension}, or a divisor with simple
normal crossings (snc)
\citelist{\cite{Demailly_extension}\cite{Cao&Paun_OT-ext}} (with
varying positivity and regularity assumptions on the relevant
metrics) and most of them come with $L^2$ estimates on the
extensions, none of the estimates provide a direct control of which
adjoint ideal sheaf that an extension is situated in.
Furthermore, the description of an adjoint ideal sheaf is, to an
extent, invariant under the log-resolution of the given data (see
\cite{Chan_adjoint-ideal-nas}*{Sec.~5}), making it a suitable language
for the use in birational/bimeromorphic geometry.
The use of them also makes it easy to generalise some statements on
smooth spaces to the ones on certain singular spaces, as illustrated in
\cites{Chan&Choi&Matsumura_injectivity,
  Chan&Choi&Matsumura_injectivity-II} where the injectivity theorems
on compact/holomorphically convex Kähler manifolds are generalised to
globally embedded snc spaces.
With all these backgrounds, the author believes that it is worth
incorporating adjoint ideal sheaves into the study of extension
problem, even though the classical techniques used in the
Ohsawa--Takegoshi extension may not be directly applicable, and an
appropriate $L^2$ estimate in general may still be far from achievable
for the moment.

Let $X$ be a compact Kähler manifold of dimension $n$ and $L$ be a
holomorphic line bundle over $X$ equipped with a singular hermitian metric
$e^{-\vphi_L}$ such that the (local) potential $\vphi_L$ has neat analytic
singularities (such that there exists a log-resolution $\pi \colon \rs
X \to X$ of $(X,\vphi_L)$ so that the polar divisor of $\pi^*\vphi_L$
is an $\fieldR$-divisor with snc, and the multiplier ideal sheaf
$\mtidlof{\vphi_L}$ of $\vphi_L$ is coherent).
Also let $\psi$ be a quasi-plurisubharmonic (quasi-psh) global
function on $X$ which satisfies $\psi \leq -1$ and has neat analytic
singularities.
Suppose that $\psi$ is not smooth.
By the strong openness property of multiplier ideal sheaves of psh
functions (which is proved by Guan and Zhou \cite{Guan&Zhou_openness}
when both $\vphi_L$ and $\psi$ are quasi-psh and have arbitrary
singularities; see also \cite{Guan&Zhou_effective_openness} and
\cite{Hiep_openness}), there exist numbers $m_{k-1}$ and $m_k$ such
that $m_k > m_{k-1} \geq 0$ and 
\begin{equation*}
  \mtidlof{\vphi_L+m_k\psi} \subsetneq \mtidlof{\vphi +m \psi}
  = \mtidlof{\vphi_L +m_{k-1} \psi}
  \quad\text{ on } X
  \quad\text{ for all } m \in [m_{k-1}, m_k) \; , 
\end{equation*}
i.e.~$m_k$ is a \emph{jumping number} of the system $\lcdata<X>$, or
more precisely, of the family $\set{\mtidlof{\vphi_L +m\psi}}_{m \geq
  0}$.
The adjoint ideal sheaves $\aidlof{\vphi_L}.{m_k} :=
\aidlof<X>{\vphi_L}.{m_k}$ (of index $\sigma$) under consideration is
the one defined in \cite{Chan_adjoint-ideal-nas}, given at each $x \in
X$ by
\begin{equation*}
  \aidlof{\vphi_L}.{m_k}_x
  :=\setd{f\in \holo_{X,x}}{
    \begin{multlined}[c][0.45\textwidth]
      \exists~\text{open set } V_x \ni x \: , \; \forall~\eps > 0 \: ,
      \\
      \eps \int_{\mathrlap{V_x}}\; \frac{\abs f^2 e^{-\vphi_L -m_k
          \psi} \dvol_{V_x}}{\logpole} < +\infty
    \end{multlined}
  } \; .
\end{equation*}
By \cite{Chan_adjoint-ideal-nas}*{Thm.~1.2.3 (1)}, there is a
filtration
\begin{equation} \label{eq:aidl_filtration}
  \xymatrix@C-1.6em@R=1em{
    {\aidlof|0|{\vphi_L}.{m_k}}
    \ar@{}[r]|-*+{\subset}
    & {\aidlof|1|{\vphi_L}.{m_k}}
    \ar@{}[r]|-*+{\subset}
    & {\dotsm}
    \ar@{}[r]|-*+{\subset}
    & {\aidlof|\sigma_{\mlc}|{\vphi_L}.{m_k}}
    \ar@{}[r]|-*+{=}
    & {\aidlof|\sigma_{\mlc}|+1*(\dotsm)}
    \ar@{}[r]|-*+{=}
    & {\dotsm}
    \\
    {\mtidlof{\vphi_L+m_k \psi}}
    \ar@{}[u]|(0.4)*[left]+{=}
    &&& {\mtidlof{\vphi_L +m_{k-1} \psi}}
    \ar@{}[u]|(0.4)*[left]+{=}
  }
\end{equation}
for some integer $\sigma_{\mlc} \in \set{1, \dots, n}$.
(Note that the ad hoc ideal sheaf $\mathscr I'\paren{m_{k-1}\psi}$
introduced in \cite{Demailly_extension}, i.e.~the subsheaf of
$\mtidlof{\vphi_L +m_{k-1} \psi}$ consisting of all the germs locally
$L^2$ with respect to the Ohsawa measure induced from
$\vphi_L+m_k\psi$, corresponds to $\aidlof|1|{\vphi_L}.{m_k}$ by
\cite{Chan_Residue-fct-proceedings}*{Cor.~2.2.2}.)

After a ``normalisation'' (see \cite{Chan_adjoint-ideal-nas}*{footnote
  9 in Sec.~2.2}), assume that $m_k = 1$ and $m_{k-1} = 0$ and we
focus on the system $\lcdata<X>$ at the jumping number $m
= 1$.
Also write $\aidlof := \aidlof.1$ for convenience.

For the purpose of this note, assume that $\lcdata<X>$ is in an \emph{snc
configuration}, i.e.~the polar varieties of $\vphi_L$ and $\psi$ are
snc divisors (in particular, the polar loci $P_{\vphi_L} :=
\vphi_L^{-1}(-\infty)$ and $P_{\psi} :=\psi^{-1}(-\infty)$ are snc
divisors) and the sum $P_{\vphi_L} + P_{\psi}$ is also snc.

In the ``qualitative'' extension result in terms of adjoint ideal sheaves
given in \cite{Chan&Choi_injectivity-proceedings}*{\S 3}, 
the polar divisor of the quasi-psh function $\psi$ is already a
reduced divisor which is the lc locus $S$ of the system $\lcdata<X>$
(i.e.~the reduced variety defined by the radical annihilator
$\Ann_{\holo_X} \faidlof|\sigma_{\mlc}|/|0|*$).
In this note, the extension result is generalised to allow a more general
quasi-psh $\psi$ whose polar divisor may not be reduced and may
contain components outside of the lc locus.
Moreover, the proof of the statement is rewritten to make use of the
notion of ``harmonic residue'' developed in the study of the
injectivity theorem in \cite{Chan&Choi&Matsumura_injectivity} and
\cite{Chan&Choi&Matsumura_injectivity-II} (so that all the arguments
are made via the use of equalities instead of inequalities).
It is hoped that this will give more insight in generalising the
result (see Remark \ref{rem:weaker-assumptions}) so that it is
applicable in practical situations. 

The statement is given as follows.
\begin{thm}[Theorem \ref{thm:global-extension}]
  Write $\aidlof* := \aidlof$ for all integers $\sigma \geq 0$ and set
  $\aidlof|-1|* := 0$.
  Also write
  \begin{equation*}
    \spH{\sheaf F} :=\cohgp q[X]{K_X\otimes L \otimes \sheaf F}
    \quad\text{ for any sheaf } \sheaf F \text{ on } X
  \end{equation*}
  for convenience.
  Suppose that \eqref{eq:curvature-assumption} holds true for a given
  integer $\sigma \in [0, \sigma_{\mlc}]$, i.e.~the
  curvature assumptions in terms of functions derived from
  $\vphi_L$ and $\psi$ are satisfied on each $\sigma$-lc centre
  (i.e.~an irreducible component of the reduced variety $\lcc'$
  defined by $\Ann_{\holo_X} \faidlof/-1*$, which is purely
  $\sigma$-codimensional in $X$ when $\lcdata<X>$ is in an snc
  configuration).
  Then, the short exact sequence
  \begin{equation*}
    \xymatrix@1{
      *+<1ex>{0} \ar[r]
      &*+<1ex>{\faidlof/-1*} \ar[r]
      &*+<1ex>{\faidlof|\sigma'|/-1*} \ar[r]
      &*+<1ex>{\faidlof|\sigma'|*} \ar[r]
      &*+<1ex>{0}
    }
  \end{equation*}
  for any $\sigma' \geq \sigma$ induces a long exact sequence which
  splits into short exact sequences 
  \begin{equation*}
    \xymatrix{
      {0} \ar[r]
      &{\spH{\faidlof/-1*}} \ar[r]^-{\tau_\sigma^{\sigma'}}
      &{\spH{\faidlof|\sigma'|/-1*}} \ar[r]
      &{\spH{\faidlof|\sigma'|/*}} \ar[r]
      &{0}
    } \quad\text{ for all } q \geq 0 \; .
  \end{equation*}
  In particular, for every $\sigma' > \sigma$, each $f \in
  \spH/0/{\faidlof|\sigma'|*}$ (a section over $\lcc+1'$) has an
  ``extension'' $F_{\sigma -1} \in \spH/0/{\faidlof|\sigma'|/-1*}$ (a
  section over $\lcc'$) such that $f \equiv F_{\sigma -1} \mod
  \aidlof*$.
  (When $\sigma' = \sigma_{\mlc}$ and $\sigma = 0$, $f$ is a
  section over $S$ and $F_{-1}$ is a section over $X$, which is the
  extension given in \cite{Cao&Demailly&Matsumura}*{Sec.~3}.)

  Furthermore, if \eqref{eq:curvature-assumption} holds for every
  $\sigma \geq 0$, it follows by induction that, for every $\sigma' \geq 0$, every $f \in
  \spH/0/{\faidlof|\sigma'|/|\sigma'-1|*}$ (a section over $\lcc|\sigma'|'$) 
  has an extension $F_{-1} \in \spH/0/{\faidlof|\sigma'|/|-1|*}
  =\spH/0/{\aidlof|\sigma'|*}$ (a section over $X$) such that $f \equiv
  F_{-1} \mod \aidlof|\sigma' -1|*$.
\end{thm}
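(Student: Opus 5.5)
The long exact sequence associated with the short exact sequence splits into the stated short exact sequences exactly when every map $\tau_\sigma^{\sigma'}\colon \spH{\faidlof/-1*} \to \spH{\faidlof|\sigma'|/-1*}$ is injective for all $q$. So the plan is to prove injectivity. I will follow the injectivity-theorem strategy of \cite{Chan&Choi&Matsumura_injectivity} and \cite{Cao&Demailly&Matsumura}*{Sec.~3}.

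\emph{Step 1: reduce to injectivity of $\tau_\sigma^{\sigma_{\mlc}}$.} Since $\aidlof|\sigma'|* = \aidlof|\sigma_{\mlc}|*$ for $\sigma' \geq \sigma_{\mlc}$, it suffices to treat $\sigma' \leq \sigma_{\mlc}$. The map $\tau_\sigma^{\sigma_{\mlc}}$ factors through $\tau_\sigma^{\sigma'}$, so injectivity of the former gives injectivity of the latter. Moreover $\faidlof/-1*$ is supported on the $\sigma$-lc centres $\lcc'$, and in the snc configuration it is, via the residue morphism, isomorphic to a direct sum, over the $\sigma$-lc centres $\lcS$, of $K$-twisted multiplier ideal sheaves on $\lcS$ (\cite{Chan_adjoint-ideal-nas}*{Thm.~1.2.3}). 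Each cohomology class in $\spH{\faidlof/-1*}$ is therefore represented through harmonic theory on these centres.

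\emph{Step 2: harmonic representatives and the harmonic residue.} I pass to a Kähler metric on $X \setminus P_{\vphi_L}\cup P_\psi$ that is complete near the poles, and represent classes in $\spH{\faidlof|\sigma_{\mlc}|/-1*}$ by $L^2$ forms with respect to $e^{-\vphi_L-\psi}$ modified by the weights $\abs\psi^{\sigma}(\log\abs\psi)^{1+\eps}$ that appear in the definition of $\aidlof*$. Let $u$ be a class in $\spH{\faidlof/-1*}$ mapping to zero, so that $u = \dbar v$ modulo $\aidlof|-1|*=0$-type terms. Using the curvature assumption \eqref{eq:curvature-assumption} on each $\sigma$-lc centre, the Bochner--Kodaira identity with the twisted weight shows that the harmonic residue of a harmonic representative is $\dbar$-closed, $\dbar^*$-closed, and satisfies $D'^{*}$-type vanishing (a Kähler-identity equality rather than an inequality, as in \cite{Chan&Choi&Matsumura_injectivity-II}). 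I then compute the residue of $\dbar v$ along the $\sigma$-lc centres. As $\eps\to 0^+$, the $\eps$-weighted integrals localise to the lc-measures on $\lcc'$, and the pairing $\iinner{u}{u}$ equals a residue integral of $\dbar v$ against $u$. Integrating by parts moves $\dbar$ onto the harmonic $u$, which gives zero. Hence $u=0$.

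\emph{Main obstacle.} The hardest part is the residue computation in the limit $\eps\to0$. I need to show that the boundary and cross terms coming from components of $P_\psi$ that are non-reduced or lie outside the lc locus vanish, and that the contributions from the higher centres $\lcc+1'$ (where $v$ lives in $\aidlof|\sigma_{\mlc}|*$) do not survive after multiplication by $\eps$. I will attempt this in local snc coordinates with explicit estimates $\eps\int \abs{z_1\cdots z_\sigma}^{-2}(\log)^{-1-\eps}$, following the computations in \cite{Chan&Choi&Matsumura_injectivity}. This is where the curvature assumption must control the mixed terms.

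\emph{Step 3: the consequences.} Surjectivity onto $\spH{\faidlof|\sigma'|*}$ at $q=0$ gives the extension $F_{\sigma-1}$ with $f\equiv F_{\sigma-1}$ modulo $\aidlof*$. For the final assertion I induct downward on $\sigma$ from $\sigma'-1$ to $0$. At each stage I correct the current extension by a section of $\faidlof|\sigma'|/-1*$, which reaches $\spH/0/{\aidlof|\sigma'|*}$ when $\sigma=0$.
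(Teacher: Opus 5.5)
Your reduction in Step~1 and the bookkeeping in Step~3 are fine. Step~2, however, fails exactly at what you call the main obstacle, and the mechanism you propose there is wrong.

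If $u=(u_p)_p$ is harmonic on the $\sigma$-lc centres and $\tau^{\sigma'}_\sigma(u)=0$, then $u$ lies in the image of the connecting map out of $\spH/q-1/{\faidlof|\sigma'|/*}$. So on each $\lcS$ it is, up to sign and modulo $\dbar$ of an $L^2$ form, $\dbar(v_{p;(\infty)}/\sect_{(p)})$, where $v_{p;(\infty)}$ is built from local liftings in $\aidlof|\sigma'|*$. This form has a pole along $\Diff_p S$ and is not in the $L^2$ domain of $\dbar$. Moving $\dbar$ onto the harmonic $u$ therefore leaves a boundary contribution on $\lcc+1'$, and it does \emph{not} disappear as $\eps\to0^+$. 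The factor $\eps$ is exactly compensated by the non-integrability of $\abs{\sect_{(p)}}^{-2}$, and Proposition~\ref{prop:residue-formula-classical-kernel} turns the limit into a finite residue integral. As in the proofs of Propositions~\ref{prop:res-formula-dbar-exact-dot-harmonic} and~\ref{prop:ker-tau-reduction}, the outcome is that $\norm u_{\lcc<X^\circ>'}^2$ equals the sum over $b\in\Iset+1$ of the pairings $\sigma_+\iinner{v_{b;(\infty)}/\sect_{(b)}}{\HRes(u)_b}_{\lcS+1[b]'}$, and these need not vanish.

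For instance, take $X=\mathbb{P}^1$, $S=\{0\}+\{\infty\}$, $L=\mathcal{O}(2)$ and $\vphi_L=\sm\vphi_S$. Then $\bphi=0$, so $\ibddbar\bphi\geq0$ holds. Yet for $q=1$ the map $\tau^1_0\colon H^1(\mathbb{P}^1,\mathcal{O}(-2))\cong\mathbb{C}\to H^1(\mathbb{P}^1,\mathcal{O})=0$ is not injective: a section on the two points with different values has no extension. So no argument that uses only $\ibddbar\bphi\geq0$ and smallness in $\eps$ can succeed.

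The missing idea is to use \eqref{eq:curvature-assumption} to prove $\HRes(u)=0$. You invoke it instead for the harmonicity of $\HRes(u)$, which only needs $\ibddbar\bphi\geq0$. The paper argues as follows.
\begin{enumerate}
\item Proposition~\ref{prop:consequence-of-positivity} gives $\idxup{\ibddbar\bphi}[\rs\omega]\ptinner{u_p}{u_p}_{\bphi,\rs\omega}=0$ on $\lcS'$.
\item Proposition~\ref{prop:consequence-of-tBK}, applied to $\psi\ps.$ and to $\psi_{\mu_p\cdot B}$, expresses $\int_{\lcS'}\idxup{\ibddbar\sm\vphi\ps.}[\rs\omega]\ptinner{u_p}{u_p}_{\bphi,\rs\omega}$ and its analogue for $\sm\vphi_{\mu_p\cdot B}$ as non-negative limits of the form $\lim_{\eps\to0^+}\eps\int\abs{\cdots}^2/\abs{\psi_\bullet}^{1+\eps}$.
\item Away from the poles, $\ibddbar\Psi\ps=-\ibddbar\sm\vphi\ps.-\ibddbar\sm\vphi_{\mu_p\cdot B}$. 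Hence \eqref{eq:curvature-assumption} makes the sum of the two integrals non-positive, so both limits vanish.
\item Proposition~\ref{prop:residue-formula-classical-kernel} identifies the first limit with $\sum_b\frac{\pi}{\nu_{b:p}}\int_{\lcS+1[b]'}\abs{\PRes[\lcS+1[b] | \lcS](\idxup{\diff\psi\ps.} . u_p)}_{\bphi,\rs\omega}^2$. Hence every summand of $\HRes(u)$ vanishes, the boundary pairing above is zero, and $u=0$.
\end{enumerate}
The paper routes the last step through $u=\deltaH w$ via Proposition~\ref{prop:ker-tau-reduction}. Your direct route with liftings in $\aidlof|\sigma'|*$ also closes once $\HRes(u)=0$ is known.

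A secondary point concerns the metric. The harmonic theory needed is that of $\residlof*$ on each $\lcS$, with respect to $\bphi$ and a metric complete only on $X\setminus B$ with bounded local potentials near $S$. You propose a metric complete near $P_\psi\supset S$. That pushes the lc centres to infinity, and the results used in the residue computation along them (Propositions~\ref{prop:singularities-along-FM} and~\ref{prop:residue-formula-classical-kernel}) are not available in that setting.
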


As can be seen from the above statement, the extension is actually
done inductively, $1$ dimension at a time.
It is then natural to ask whether the known $L^2$ estimates for
extensions from subvarieties of codimension $1$ can be used
inductively to provide an estimate for $F_{-1}$.

Also see Remarks \ref{rem:curv-assumption-on-X_only} and
\ref{rem:weaker-assumptions} for some speculations on weakening the
positivity assumption \eqref{eq:curvature-assumption}.

The notation and convention used in this note follow mostly those in
\citelist{
  \cite{Chan_adjoint-ideal-nas}
  \cite{Chan_Residue-fct-proceedings}
  \cite{Chan&Choi_injectivity-I}
  \cite{Chan&Choi&Matsumura_injectivity}
  \cite{Chan&Choi&Matsumura_injectivity-II}}.
The following convention is recalled here for clarity.
\begin{notation} \label{notation:potentials} \
  \begin{itemize}[align=left, itemindent=*, leftmargin=*]
  \item A \emph{potential} $\vphi_L$ (of the curvature of a metric
    $e^{-\vphi_L}$) on a holomorphic line bundle $L$ is considered as
    a collection of local functions $\set{\vphi_\gamma}_\gamma$ (with
    respect to an open cover of $X$) whose pairwise differences
    $\vphi_\gamma - \vphi_{\gamma'}$ are pluriharmonic;
  
  \item For any prime (Cartier) divisor $E$ on $X$,
    \begin{itemize}[label=$\triangleright$]
    \item ``$E$'' is abused to mean also its associated line bundle,
    \item $\phi_E$ denotes the potential $\log\abs{\sect_E}^2$ on $E$
      induced from a canonical section $\sect_E$ of $E$,
    \item $\sm\vphi_E$ denotes a smooth potential on $E$,
    \item $\psi_E := \phi_E -\sm\vphi_E$ is a global function on $X$,
      normalised by adding a suitable constant such that
      $\psi_E \leq -1$.
    \end{itemize}
    The above notations are extended to any $\fieldR$-divisor $E$ by
    linearity.
  \end{itemize}
\end{notation}


\subsection{Adjoint ideal sheaves, lc centres and residue exact sequences}
\label{sec:notation-in-snc}

The notions associated with adjoint ideal sheaves are recalled from
\cite{Chan_adjoint-ideal-nas} in this section for the ease of
reference as well as fixing notation.
Keep writing $\aidlof* := \aidlof$ for all $\sigma \geq 0$ and
$\aidlof|-1|* := 0$ in what follows.

Throughout this paper, the system $\lcdata<X>$ is assumed to be
in the snc configuration.
The lc locus $S$, defined by the radical ideal sheaf $\defidlof{S} :=
\Ann_{\holo_X} \faidlof|\sigma_{\mlc}|/|0|*$ (thus $S \subset
P_{\psi}$), is a reduced snc divisor.
Moreover, each $\sigma$-lc centre $\lcS$ in the union $\lcc' :=
\bigcup_{p \in \Iset} \lcS$ (defined by the radical ideal sheaf
$\defidlof{\lcc'} :=\Ann_{\holo_X} \faidlof/-1*$) is an irreducible
component (of codimension $\sigma$ in $X$) of intersection of
irreducible components of $S$.
They coincide with the $\sigma$-codimensional lc centres of the
log-canonical (lc) pair $(X,S)$ in the minimal model program (see, for
example, \cite{Kollar_Sing-of-MMP}*{Def.~4.15}).

Recall from \cite{Chan_adjoint-ideal-nas}*{Sec.~2.3} that, under the
snc assumption, there is a decomposition
\begin{equation*}
  \vphi_L+\psi =\bphi +\phi_{S_0} +\phi_S
\end{equation*}
such that the polar locus $P_{\bphi} := \bphi^{-1}(-\infty)$ is a
reduced snc divisor which contains no irreducible components of $S$,
and $S_0$ is a divisor with $\supp S_0 \leq S$.
Then $\bphi$ is a potential on
\begin{equation*}
  L' := L \otimes S_0^{-1} \otimes S^{-1} \; .
\end{equation*}
Unless stated otherwise, it is assumed in the rest of the paper that
\begin{equation*}
  \ibddbar\paren{\vphi_L + \psi} \geq 0 \; ,
  \quad\text{ or equivalently, }\quad
  \ibddbar\bphi \geq 0 \quad\text{ on } X \; .
\end{equation*}

According to \cite{Chan_adjoint-ideal-nas}*{Thm.~4.1.2 (1)}, under the
snc assumption, the adjoint ideal sheaves decompose into
\begin{equation*}
  \aidlof = \mtidlof{\bphi+\phi_{S_0}} \cdot \defidlof{\lcc+1'}
  =\mtidlof{\vphi_L
    \alert[Gray]{{} +{}\smash{\cancelto{0}{m_{k-1} \vphantom{m^m}}} \psi}} \cdot
  \defidlof{\lcc+1'}
  \quad\text{ for every } \sigma \geq 0 \; .
\end{equation*}


Let $\sect$ and $\sect_0$ be canonical sections of $S$ and $S_0$
respectively. 
Write $S = \sum_{i \in \Iset||} S_i$, where each $S_i$ is irreducible,
and let $\sect_i$ be a canonical section of $S_i$ such that $\sect
= \prod_{i \in \Iset||} \sect_i$.
Set also, for every $p \in \Iset$ (i.e.~for every $\sigma$-lc centre $\lcS \subset \lcc'$),
\begin{equation*}
  S\ps
  := \;\;
  \smashoperator{ \sum_{i \in \Iset|| \colon S_i \not\supset \lcS} } S_i \; ,
  \quad
  \sect\ps
  := \;\;
  \smashoperator{
    \prod_{i \in \Iset|| \colon S_i \not\supset \lcS} }\; \sect_i
  \quad\text{ and }\quad
  \psi\ps := \phi\ps -\sm\vphi\ps := \phi_{S\ps} -\sm\vphi_{S\ps}
  =\psi_{S\ps} \; ,
\end{equation*}
where $\psi_{S\ps}$, $\sm\vphi_{S\ps}$ and $\phi_{S\ps}
:=\log\abs{\sect\ps}^2$ are defined as in Notation
\ref{notation:potentials}.
(Notice that the ideal sheaf $\defidlof{\lcc+1'}$ can be generated by
$\set{\sect\ps}_{p \in \Iset}$ by treating each $\sect\ps$ as a local
function.) 
Furthermore, for any $\nu := \paren{\nu_i}_{i \in \Iset||}$ with
$\nu_i \in \fieldR_{\geq 0}$, write
\begin{equation*}
  \nu \cdot S := \sum_{i\in\Iset||} \nu_i S_i \; , \quad
  \nu \cdot S\ps
  := \;\;
  \smashoperator{\sum_{i\in\Iset|| \colon S_i \not\supset \lcS}}
  \nu_i S_i
  \quad\text{ and }\quad
  \psi\ps[\nu]. := \psi_{\nu \cdot S\ps} \; .
\end{equation*}

Let $\Diff_p S$ be the \emph{general different of $S$ on $\lcS$} (see
\cite{Kollar_Sing-of-MMP}*{\S 4.2}), given by
\begin{equation*}
  K_{\lcS} \otimes \Diff_p S = \parres{K_X \otimes S}_{\lcS} \; .
\end{equation*}
Note that $\Diff_p S = \res{S\ps}_{\lcS}$ and $\res{\sect\ps}_{\lcS}$
is a canonical section of $\Diff_p S$.
Sometimes, $\sect\ps$, $\psi\ps$ and $\psi\ps[\nu].$ may be abused to mean their
restrictions to $\lcS$.

Let $B$ be a reduced snc divisor on $X$ which has no common components
with $S$ and that $B + S$ has only snc.
Assume from now on that
\begin{equation*}
  P_{\bphi} \leq B \; .
\end{equation*}

Write also $B = \sum_{j \in I_B} B_j$ as the
decomposition into irreducible components.
Let $\sect[b]_j$ be a canonical section of $B_j$ and
$\phi_{B_j} := \log\abs{\sect[b]_j}^2$ for $j \in I_B$.
For any $\mu =\paren{\mu_j}_{j \in I_B}$ with $\mu_j \in \fieldR_{\geq
0}$, define $\mu \cdot B$ analogously to $\nu \cdot S$ and define
$\psi_{\mu\cdot B}$ as in Notation \ref{notation:potentials}.

For later use, it is remarked here that any quasi-psh function like
$\psi$ can be decomposed into 
\begin{equation} \label{eq:decompose-psi_S+P}
  \psi =
  \underbrace{
    \sum_{i \in \Iset||} \nu_i \paren{\phi_{S_i} -\sm\vphi_{S_i}}
  }_{\displaystyle =:  \psi_{\nu \cdot S} \mathrlap{\ \leq -1}}
  +\underbrace{
    \sum_{j \in I_B} \mu_j \paren{\phi_{B_j} -\sm\vphi_{B_j}}
  }_{\displaystyle =: \psi_{\mu \cdot B} \mathrlap{\ \leq -1}} \; ,
\end{equation}
after choosing the smooth potentials $\sm\vphi_{S_i}$ and
$\sm\vphi_{B_j}$ suitably.
Notice that the polar set of $\psi$ is thus split into $\psi_{\nu\cdot
  S}^{-1}(-\infty) \subset S$ and $\psi_{\mu \cdot B}^{-1}(-\infty)
\subset B$.
Note also that $\nu_i = \lelong{\psi}[S_i] =\lelong{\psi_{\nu\cdot
    S}}[S_i]$, the generic Lelong numbers of $\psi$ (or
$\psi_{\nu\cdot S}$) along $S_i$,\footnote{
  The Lelong number $\lelong{\vphi}[x]$ used here is normalised such
  that $\lelong{\log\abs{z}^2}[0] = 1$, where $z$ is a holomorphic
  coordinate function (i.e.~$\frac 12$ of the usual Lelong number
  defined in, say, \cite{Demailly_multiplier-ideal-sheaves}*{\S 2.B}),
  for convenience.
  The \emph{generic Lelong number} of $\vphi$ along an irreducible
  divisor $D$ is given by $\lelong{\vphi}[D] := \inf_{x \in D}
  \lelong{\vphi}[x]$.
} for each $i \in \Iset||$, and,
similarly, $\mu_j = \lelong{\psi}[B_j] =\lelong{\psi_{\mu\cdot
    B}}[B_j]$ for each $j \in I_B$.


Let $V \subset X$ be a sufficiently small Stein open set in a
coordinate chart (more precisely, an admissible open set; see
\cite{Chan_adjoint-ideal-nas}*{Sec.~4.1}). 
By \cite{Chan_adjoint-ideal-nas}*{Thm.~4.1.2 (2)}, for any $f \in
\aidlof*(\cl V)$ with domain some neighbourhood $V' \Supset V$ and for
any compactly supported smooth cut-off function $\rho \colon V' \to
[0,1]$ with $\res\rho_V \equiv 1$, one has 
\begin{equation} \label{eq:RTF-at-0}
  \begin{aligned}
    \eps \int_{V'} \frac{\rho \abs f^2 \:e^{-\vphi_L-\psi}}{\logpole}
    \xrightarrow{\eps \tendsto 0^+}
    &~\sum_{p \in \Iset}
    \frac{\pi^\sigma}{(\sigma -1)! \:\vect\nu_p}
    \int_{\lcS \cap  V'}
    \rho \abs{\frac{f}{\sect_0 \sect\ps}}^2
    e^{-\bphi} \dvol_{\lcS}
    \\
    \xrightarrow{\rho \descendsto \charfct_{\cl V}}
    &~\sum_{p \in \Iset}
    \frac{\pi^\sigma}{(\sigma -1)! \:\vect\nu_p}
    \int_{\lcS \cap  V}
    \abs{\frac{f}{\sect_0 \sect\ps}}^2
    e^{-\bphi} \dvol_{\lcS}
    \\
    =: &~\sum_{p \in \Iset}
    \norm{
      \frac{f}{\sect_0 \sect\ps}
    }_{\mathrlap{\lcS \cap  V}}^2 \quad
    =: \norm{\Res(f)}_{\lcc' \cap V}^2 < +\infty \; ,
  \end{aligned}
\end{equation}
where $\rho \descendsto \charfct_{\cl V}$ denotes the limit that $\rho$
descends pointwisely to the characteristic function $\charfct_{\cl V}$
of $\cl V$ on $X$, $\vect\nu_p$ is the product of all generic Lelong numbers
$\lelong{\psi}[S_i]$ along $S_i$ which contains $\lcS$.
Note that $\res{\frac{f}{\sect_0 \sect\ps}}_{\lcS}$ is
holomorphic.
The finiteness of the far right-hand side indicates that there is a
well-defined homomorphism
\begin{equation*}
  \xymatrix@C=2.2em@R=.3em{
    {\aidlof}
    \ar[r]^-{\Res}
    \ar@{}[d]|(0.4)*[left]{\in}
    & *++{\residlof{\vphi_L}[\psi]}
    \ar@{}[d]|(0.4)*[left]{\in}
    & *+<-1.5em,0pt>{\smash[b]{\bigoplus_{p \in \Iset}}\:
      \res{ S_0^{-1}}_{\lcS} \otimes
        \paren{\Diff_p  S}^{-1} \otimes
        \mtidlof<\lcS>{\bphi}}
      \ar@{}[l]|(0.7)*+{:=}
    \\
    *++{ f} \ar@{|->}[r]
    & {\paren{\res{\frac{ f}{\sect_0
            \sect\ps}}_{\lcS}}_{\mathrlap{p\in\Iset}} \mathrlap{\qquad ,}} 
  }
\end{equation*}
which is called the \emph{residue morphism}, while the sheaf
$\residlof* := \residlof{\vphi_L}[\psi]$ is called the \emph{residue
  sheaf}.
(Note that it is well defined even for $\sigma = 0$, in which case
$\residlof|0|* = S_0^{-1} \otimes S^{-1} \otimes \mtidlof<X>{\bphi}$.)
Apparently, the sheaf $\residlof*$ is supported on $\lcc'$ (and is
actually the push-forward of a sheaf living on the normalisation of
$\lcc'$).
The norm $\norm\cdot_{\lcc'}$ given in \eqref{eq:RTF-at-0} is referred
to as the \emph{residue norm} on $\residlof*$ over $\lcc'$.
As explained in \cite{Chan_adjoint-ideal-nas}*{Sec.~4.2}, when the
residue morphism $\Res$ is twisted by $K_{X}$, the residue
morphism is given by
\begin{equation*}
  \xymatrix@C=2.2em@R=.3em{
    {K_{ X} \otimes {\aidlof*}}
    \ar[r]^-{\Res}
    \ar@{}[d]|(0.4)*[left]{\in}
    & *++{K_{ X} \otimes {\residlof*}}
    \ar@{}[d]|(0.4)*[left]{\in}
    & *+<-3em,0pt>{\smash[b]{\bigoplus_{p \in \Iset}}\:
      K_{\lcS} \otimes \parres{ S_0^{-1} \otimes S^{-1}}_{\lcS} \otimes
      \mtidlof<\lcS>{\bphi}}
    \ar@{}[l]|(0.77)*+{=}
    \\
    *++{ f} \ar@{|->}[r]
    & {}\save +<2em,0pt>{\paren{\PRes[\lcS](\frac{ f}{\sect_0
            \sect})}_{\mathrlap{p\in\Iset}} \mathrlap{\qquad ,}} \restore
  }
\end{equation*}
where $\PRes[\lcS]$ denotes the Poincaré residue map with images
on $\lcS$ (see \cite{Kollar_Sing-of-MMP}*{\S 4.18}).

The theorem in \cite{Chan_adjoint-ideal-nas}*{Thm.~4.3.1} (proved
essentially via a local $L^2$ extension theorem) guarantees that the
residue morphism fits into the \emph{residue short exact sequence}
\begin{equation} \label{eq:residue-seq}
  \xymatrix{
    {0} \ar[r]
    & {{\aidlof-1*}} \ar[r]
    & {{\aidlof*}} \ar[r]^-{{\Res}}
    & {{\residlof*}} \ar[r]
    & {0 \; .}
  }
\end{equation}




{
  \setDefaultvphi{\bphi}
  \setDefaultMetric{\rs*\omega}


  \NewDocumentCommand{\vphilist}{
  }{\defaultvphi, \defaultMetric}

  \subsection{Harmonic forms under semi-positive curvature}
  \label{sec:harmonic-forms-pos-curv}


{

  \NewDocumentCommand{\decor}{
    O{\dbar}               
    D//{\bullet,\bullet}   
  }{{\vphantom{#1}}^{#2}}

  Set $X^\circ := X \setminus B$ and $\psi_{B} :=
  \phi_{B} -\sm\vphi_{B} \leq -1$ (see Notation \ref{notation:potentials}).
  As in \cite{Chan&Choi_injectivity-I}*{\S 2.2, Item (4)}, let
  \begin{equation*}
    \rs\omega := 2\omega + \ibddbar
    \frac{1}{\log\abs{\ell\psi_{B}}}
    \quad\text{ for } \ell \gg e
  \end{equation*}
  be a \emph{complete} Kähler metric on $X^\circ$ (note that it is not
  complete on $X^\circ \setminus S$) such that $\rs\omega \geq \omega$ (where
  $\omega$ is a smooth Kähler metric on $X$).
  Note that $\rs\omega$ has bounded local potential around every point
  in $X$ (not only $X^\circ$).
  Let $\Ltwo/n,q/<X>{L'} := \Ltwo/n,q/<X>{L'}_{\vphilist}$ be the Hilbert
  space of $L'$-valued $(n,q)$-forms with $L^2$
  coefficients with respect to $\bphi$ and $\rs\omega$ on
  $X^\circ$.\footnote{
    The notation ``$\Ltwo/n,q/<X>{L'}$'' follows the convention in
    \cite{Chan&Choi&Matsumura_injectivity-II}.
    See \cite{Chan&Choi&Matsumura_injectivity-II}*{footnote 1}.
  } 
  Note that the exterior differential operator $\dbar$ acting on
  currents is a densely defined operator with closed graph on
  $\Ltwo/n,q/<X>{L'}$ with domain given by
  \begin{equation*}
    \paren{\Dom\dbar}\decor/n,q/ :=\paren{\Dom\dbar}\decor/n,q/_{\vphilist} 
    := \setd{
      \zeta \in \Ltwo/n,q/<X>{L'}_{\vphilist}
    }{
      \dbar\zeta \in \Ltwo/n,q+1/<X>{L'}_{\vphilist}
    } \; .
  \end{equation*}
  Denote the corresponding kernel and image of $\dbar$ by
  $\paren{\ker\dbar}\decor/n,q/$ and
  $\paren{\im\dbar}\decor/n,q/$ respectively (the superscripts
  indicate that they are subspaces of $\Ltwo/n,q/<X>{L'}$).
  The $L^2$ Dolbeault isomorphism on $X$ (see
  \cite{Fujino_injectivity-II}*{Lemma 3.20} or
  \cite{Matsumura_injectivity-Kaehler}*{Prop.~2.16}) states that
  \begin{equation*} 
    \cohgp q[X]{K_X \otimes L' \otimes \mtidlof{\bphi}}
    \isom
    \frac{\paren{\ker\dbar}\decor/n,q/}{\paren{\im\dbar}\decor/n,q/}
    =: \cohgp{n,q}<\dbar>[X]{L'}_{\vphilist} \; ,
  \end{equation*}
  in which the left-hand side is treated as the \v Cech cohomology group.

  With $\dbadj$ denoting the Hilbert space adjoint of $\dbar$ with
  respect to the $L^2$ norm $\norm{\cdot}_{X^\circ} :=
  \norm\cdot_{X^\circ,\vphilist}$, one has the orthogonal
  decomposition (see, for example, \cite{Demailly}*{Ch.~VIII, \S 1})
  \begin{equation*}
    \Ltwo/n,q/{L'}_{\vphilist}
    = \Harm' \oplus \cl{\paren{\im\dbar}}\decor/n,q/
    \oplus \cl{\paren{\im\dbadj}}\decor/n,q/
    = \Harm' \oplus \paren{\im\dbar}\decor/n,q/
    \oplus \paren{\im\dbadj}\decor/n,q/ \; ,
  \end{equation*}
  where $\Harm' := \Harm'{L'},_{\rs\omega} :=
  \Harm{\logKX<X>[L']},_{\rs\omega}$ is the space of $L'$-valued
  harmonic $(n,q)$-forms with respect to $\bphi$ and $\rs\omega$ on
  $X^\circ$, and $\paren{\im\dbar}\decor/n,q/$ and
  $\paren{\im\dbadj}\decor/n,q/$ are the images of $\dbar$ and
  $\dbadj$ respectively in $\Ltwo/n,q/{L'}$, which are closed as $X$
  is compact (see, for example,
  \cite{Matsumura_injectivity}*{Prop.~5.8}).
  One therefore obtains the isomorphisms
  \begin{equation*} 
    \Harm' \isom \cohgp{n,q}<\dbar>[X]{L'}_{\vphilist}
    \isom
    \cohgp q[X]{K_X \otimes L' \otimes \mtidlof{\bphi}} \; .
  \end{equation*}

  With a finite Stein open cover $\cvr V = \set{V_i}_{i \in I}$,
  Leray's theorem assures the isomorphism $\cohgp q[\cvr V]{\dotsm}
  \isom \cohgp q[X]{\dotsm}$ as soon as the coefficient space
  ($\dotsm$) is a coherent sheaf.
  Fix a partition of unity $\set{\rho^i}_{i \in I}$ subordinate to
  $\cvr V$.
  For every harmonic form $u \in \Harm'$, one can solve
  $\dbar$-equations induced from $u$ using H\"ormander's $L^2$ estimates on the
  intersections of the Stein open sets in $\cvr V$ (see
  \cite{Matsumura_injectivity}*{Prop.~5.5} or
  \cite{Chan&Choi_injectivity-I}*{Lemma 3.2.1 and Remark 3.2.2}) to
  obtain a $K_X \otimes L' \otimes \mtidlof{\bphi}$-valued \v Cech
  cocycle $\seq{\alpha_{\idx 0.q}}_{\idx 0,q \in I}$ with respect to
  $\cvr V$ such that (under the Einstein summation convention)
  \begin{equation} \label{eq:Cech-Dolbeault-isom}
    \begin{aligned}
      u &=\dbar v_{(2)} +\dbar \rho^{i_{q-1}} \wedge \dotsm \wedge
      \dbar\rho^{i_0} \alpha_{\idx 0.q} \qquad\paren{\forall~ i_q \in
        I}
      \\
      &=\dbar v_{(2)} +\dbar \rho^{i_{q-1}} \wedge \dotsm \wedge
      \dbar\rho^{i_0} \cdot \rho^{i_q} \:\alpha_{\idx 0.q}
      \\
      &=\dbar v_{(2)} +(-1)^q \:\underbrace{\dbar \rho^{i_{q}} \wedge
        \dotsm \wedge \dbar\rho^{i_1} \cdot \rho^{i_0} }_{=: \:
        \paren{\dbar\rho}^{\idx q.0}} \alpha_{\idx 0.q}
    \end{aligned}
  \end{equation}
  for some $K_X \otimes L'$-valued
  $(0,q-1)$-form $v_{(2)}$ which is $L^2$ on $X^\circ$ with respect to $\vphilist$.
  Notice that $\seq{\alpha_{\idx 0.q}}_{\idx 0,q \in I}$ represents
  the class of $u$ in $\cohgp q[X]{K_X \otimes L' \otimes
    \mtidlof{\bphi}}$ under the $L^2$ Dolbeault isomorphism. 

  The Bochner--Kodaira formula on 
  $X$, together with the positivity $\ibddbar\bphi \geq 0$, assures
  the following.

  \begin{prop}[\cite{Chan&Choi&Matsumura_injectivity}*{Prop.~2.2.2}]
    \label{prop:consequence-of-positivity}
    
    Suppose that
    $\ibddbar\bphi \geq 0$ and $u \in \Harm'{L'},_{\rs\omega}$.
    Then, one has
    \begin{equation*}
      \nabla^{(0,1)}u = 0 \quad\text{ and }\quad
      \idxup{\ibddbar\bphi} \ptinner{u}{u}_{\rs\omega} = 0
      \quad\text{ on } X^\circ \; .
    \end{equation*}
  \end{prop}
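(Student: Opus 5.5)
The plan is to run the standard Bochner--Kodaira--Nakano argument on the complete Kähler manifold $(X^\circ,\rs\omega)$ and show that harmonicity forces both terms of the identity to vanish. Recall that $\rs\omega$ is complete on $X^\circ$; this holds near $B$ by construction, while near points of $S\setminus B$ it only has bounded potentials. Completeness on $X^\circ$ is what is needed: it guarantees that $\dbar$ and $\dbadj$ (Hilbert space adjoint) coincide with the minimal/maximal extensions of the formal operators. Consequently the Bochner--Kodaira--Nakano identity holds on $\Dom\dbar\cap\Dom\dbadj$ in integrated form. The weight $\bphi$ is smooth on $X^\circ$, since $P_{\bphi}\le B$, so all the operators are classical there.

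\textbf{Step 1.} Write $u\in\Harm'$, so $\dbar u=0$ and $\dbadj u=0$ in the $L^2$ sense on $X^\circ$. By ellipticity of $\Box$ on $X^\circ$, $u$ is smooth there.

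\textbf{Step 2.} For $L'$-valued $(n,q)$-forms, apply the Bochner--Kodaira--Nakano identity in the form
\begin{equation*}
  \norm{\dbar u}^2+\norm{\dbadj u}^2
  =\norm{\nabla^{(0,1)*}u}^2
  +\int_{X^\circ}\idxup{\ibddbar\bphi}\ptinner{u}{u}_{\rs\omega}\,e^{-\bphi}\dvol_{\rs\omega} .
\end{equation*}
For $(n,q)$-forms the $(1,0)$ part of the connection is the one that is "adjoined" (the Nakano-type version). Equivalently, using the Hodge-star version, one gets $\norm{\nabla^{(0,1)}\,\cdot}$ in the form stated in the proposition. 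The first task is to choose the correct version so that the remaining nonnegative terms are exactly $\norm{\nabla^{(0,1)}u}^2$ together with the curvature term contracted by $\rs\omega$.

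\textbf{Step 3.} Justify the integrated identity for $u$. Use completeness of $\rs\omega$ on $X^\circ$: by the Andreotti--Vesentini density lemma, smooth compactly supported forms on $X^\circ$ are dense in $\Dom\dbar\cap\Dom\dbadj$ for the graph norm. The identity holds for such forms by integration by parts. Passing to the limit, and applying Fatou's lemma to the nonnegative terms, gives the identity, or at least the inequality "$\ge$", for $u$.

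\textbf{Step 4.} The left-hand side vanishes for $u$. The curvature integrand is pointwise nonnegative because $\ibddbar\bphi\ge0$ and the contraction $\idxup{\cdot}$ of a semipositive form on $(n,q)$-forms is nonnegative. Hence both $\nabla^{(0,1)}u=0$ and $\idxup{\ibddbar\bphi}\ptinner{u}{u}_{\rs\omega}=0$ almost everywhere, and therefore everywhere on $X^\circ$ by smoothness.

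The main obstacle is Step 3, the approximation. It requires checking that the cut-offs built from the completeness of $\rs\omega$ produce error terms $\norm{d\chi_k}_{\rs\omega}$ that tend to zero, and that the weight $\bphi$, although singular along $B$, causes no boundary terms. Both points follow from completeness and the smoothness of $\bphi$ on $X^\circ$.
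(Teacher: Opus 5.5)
Your proposal is correct and follows the standard argument behind the cited result: apply the Bochner--Kodaira identity on the complete Kähler manifold $(X^\circ,\rs\omega)$, where $\bphi$ is smooth, extend it to $u\in\Dom\dbar\cap\Dom\dbadj$ by the Andreotti--Vesentini density lemma, and use $\ibddbar\bphi\geq 0$ to force both nonnegative terms to vanish. The one thing to fix is the garbled gradient term $\norm{\nabla^{(0,1)*}u}^2$ in your display: the Nakano form gives $\norm{D'^{*}u}^2$ (since $D'u=0$ for $(n,q)$-forms), and on a Kähler manifold $D'^{*}=-\sum_j\iota_{e_j}\nabla_{\bar e_j}$, where the $\iota_{e_j}$ send $(n,q)$-forms into mutually orthogonal subspaces without changing norms (up to a normalising constant), so $\abs{D'^{*}u}=\abs{\nabla^{(0,1)}u}$ pointwise and no choice between versions is needed.
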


  Using the cut-off $\theta_\eps := \theta \circ
  \frac{1}{\abs{\psi_S}^\eps}$ in the proof of
  \cite{Chan&Choi&Matsumura_injectivity}*{Lemma 2.2.1} (where $\psi_S
  := \phi_S - \sm\vphi_S$ and $\theta \colon [0,\infty) \to [0,1]$ is
  a smooth non-decreasing function such that
  $\res{\theta}_{[0,\frac 12]} \equiv 0$ and
  $\res{\theta}_{[1,\infty)} \equiv 1$), one can obtain the following.
  
  \begin{lemma}[\cite{Chan&Choi&Matsumura_injectivity}*{Lemma 2.2.1}]
    \label{lem:su-harmonicity}
    If $u \in \Harm'{L'},_{\rs\omega}$, then $u \sect \in
    \Harm'{L' \otimes S},{\bphi+\phi_S}_{\rs\omega}$ (and $u \sect_0 \sect \in
    \Harm'{L},{\vphi_L+\psi}_{\rs\omega}$). 
  \end{lemma}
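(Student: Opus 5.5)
The plan is to verify the two conditions defining $\Harm'{L'\otimes S},{\bphi+\phi_S}_{\rs\omega}$ for $u\sect$: that $u\sect$ is in $\Ltwo/n,q/<X>{L'\otimes S}_{\bphi+\phi_S,\rs\omega}$, that $\dbar(u\sect)=0$, and that $u\sect\in\ker\dbadj$ for the Hilbert space adjoint taken with respect to the weight $\bphi+\phi_S$. The $L^2$ part is immediate, since pointwise
\begin{equation*}
  \abs{u\sect}^2_{\rs\omega}\, e^{-\bphi-\phi_S}=\abs{u}^2_{\rs\omega}\, e^{-\bphi}
  \quad\text{ on } X^\circ\setminus S \; ,
\end{equation*}
so $\norm{u\sect}_{X^\circ,\bphi+\phi_S}=\norm{u}_{X^\circ,\bphi}<+\infty$. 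Because $\sect$ is holomorphic, $\dbar(u\sect)=\sect\,\dbar u=0$ in the sense of currents. Everything therefore reduces to the adjoint condition: for every $w\in\paren{\Dom\dbar}^{n,q-1}_{\bphi+\phi_S,\rs\omega}$ I need $\iinner{u\sect}{\dbar w}_{\bphi+\phi_S}=0$.

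I would approach this with the cut-off $\theta_\eps=\theta\circ\frac{1}{\abs{\psi_S}^\eps}$, which vanishes on a neighbourhood of $S$ and tends to $1$ pointwise on $X^\circ\setminus S$. Writing $\dbar w=\dbar(\theta_\eps w)+\dbar(1-\theta_\eps)\wedge w+(1-\theta_\eps)\dbar w$, the last term contributes $o(1)$ by dominated convergence. For the first term, the weight identity above gives
\begin{equation*}
  \iinner{u\sect}{\dbar(\theta_\eps w)}_{\bphi+\phi_S}=\iinner{u}{\dbar(\theta_\eps w/\sect)}_{\bphi} \; .
\end{equation*}
This holds because $\theta_\eps w/\sect$ is supported away from $S$, where $1/\sect$ is holomorphic. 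Moreover $\abs{\theta_\eps w/\sect}^2e^{-\bphi}=\theta_\eps^2\abs w^2 e^{-\bphi-\phi_S}$, so $\theta_\eps w/\sect\in\paren{\Dom\dbar}^{n,q-1}_{\bphi,\rs\omega}$. Since $u\in\ker\dbadj$ for the weight $\bphi$ (this is where completeness of $\rs\omega$ on $X^\circ$ enters, through the identification of harmonic forms), the first term vanishes identically.

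The main obstacle is the error term $\iinner{u\sect}{\dbar\theta_\eps\wedge w}_{\bphi+\phi_S}$. Here
\begin{equation*}
  \abs{\dbar\theta_\eps}\lesssim \eps\,\frac{\abs{\dbar\psi_S}}{\abs{\psi_S}^{1+\eps}}\,\charfct_{\set{1\le\abs{\psi_S}^\eps\le 2}} \; ,
\end{equation*}
and $\abs{\dbar\psi_S}$ blows up like $1/\abs{\sect}$. By Cauchy--Schwarz the term is bounded by $\norm w_{\bphi+\phi_S}$ times
\begin{equation*}
  \eps\paren{\int_{\set{1\le\abs{\psi_S}^\eps\le2}}\frac{\abs u^2 e^{-\bphi}\abs{\partial\psi_S}^2}{\abs{\psi_S}^{2+2\eps}}}^{1/2} \; .
\end{equation*}
I would first try to show that the integral stays bounded as $\eps\to0^+$, which would make the whole term $O(\eps)$. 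On $X^\circ$ near $S$ the weight $\bphi$ is smooth, because $P_{\bphi}\le B$. Moreover $u$ is harmonic with $\nabla^{(0,1)}u=0$ by Proposition \ref{prop:consequence-of-positivity}, hence locally bounded there, and $\abs{\partial\psi_S}^2_{\rs\omega}\lesssim\abs{\partial\psi_S}^2_{\omega}\lesssim\abs{\sect}^{-2}$. In local snc coordinates the integral is then controlled by $\int\frac{\abs{\sect}^{-2}\,d\vol}{\abs{\log\abs{\sect}^2}^{2+2\eps}}$, which is finite uniformly in $\eps$, with the same computation as in \eqref{eq:RTF-at-0}.

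Near $S\cap B$ local boundedness of $u$ may fail. There I would instead absorb the estimate using the $L^2$ norm of $u$ together with the polynomial decay in $\abs{\psi_S}$, applying a partition of unity and treating the $B$-directions by the bounded local potential of $\rs\omega$. This is the step I expect to need the most care. Letting $\eps\to0^+$ then gives $u\sect\in\ker\dbadj$.

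The parenthetical statement should follow by the same argument applied to $u\sect_0\sect$ with weight $\vphi_L+\psi=\bphi+\phi_{S_0}+\phi_S$, since $\supp S_0\le S$ and the same cut-off $\theta_\eps$ works. The holomorphic factor becomes $\sect_0\sect$, which is invertible off $S$, and $\abs{u\sect_0\sect}^2e^{-\vphi_L-\psi}=\abs u^2e^{-\bphi}$.
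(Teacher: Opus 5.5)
Your route is the paper's: the lemma is obtained by running the argument of \cite{Chan&Choi&Matsumura_injectivity}*{Lemma 2.2.1} with the cut-off $\theta_\eps=\theta\circ\abs{\psi_S}^{-\eps}$. The following parts of your proposal are correct:
the $L^2$ condition,
$\dbar(u\sect)=0$,
the term $(1-\theta_\eps)\dbar w$, and
the vanishing of $\iinner{u\sect}{\dbar(\theta_\eps w)}$ via $\theta_\eps w/\sect\in\Dom\dbar$.
In that last step, $u\in\ker\dbadj$ holds by the definition of the harmonic space; completeness is not what is used.

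The gap is in the error term $\iinner{u\sect}{\dbar\theta_\eps\wedge w}$ near $S\cap B$, which is where the real content of the lemma lies. What you propose there cannot work: the $L^2$ norm of $u$, the decay $\abs{\psi_S}^{-2-2\eps}$ and the bounded potential of $\rs\omega$ are not enough. In snc coordinates with $S=\set{z_1\dotsm z_k=0}$ one has $\abs{\partial\psi_S}^2_\omega\approx\sum_j\abs{z_j}^{-2}$. This is a power-type blow-up that logarithmic factors cannot absorb. Mere $L^2$-integrability of $u$ gives no control of $\int\abs{u}^2e^{-\bphi}\abs{z_j}^{-2}$. For instance, a density like $\abs{z_1}^{-2}(\log\abs{z_1}^{-2})^{-2}$ is integrable, yet your integral over $\set{\abs{\psi_S}\le 2^{1/\eps}}$ is then roughly of size $e^{2^{1/\eps}}$, far from $o(\eps^{-2})$. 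The potential of $\rs\omega$ says nothing about $u$ itself.

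The missing ingredient is regularity of $u$ in the directions normal to $S$, uniformly up to $B$. This comes from Proposition \ref{prop:singularities-along-FM}: $*u$ is holomorphic on all of $X$, so the coefficients of $u$ and of $\idxup{\diff\psi_S}. u\,\sect$ lie in $\smooth_X\bigl[\abs{\psi_B}^{\pm},(\log\abs{\ell\psi_B})^{\pm},1/\abs{\sect[b]_j}\bigr]$. That is, they are smooth in the $z$-variables and singular only in the $B$-variables. Since $S+B$ is snc, Fubini's theorem separates the two; this is the remark the paper makes before Proposition \ref{prop:residue-formula-classical-kernel}.

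With that input the argument closes naturally, and this is what makes the choice $\theta\circ\abs{\psi_S}^{-\eps}$ convenient:
\begin{itemize}
\item Up to sign, the error term equals $\eps\int\theta'(\abs{\psi_S}^{-\eps})\,\langle\idxup{\diff\psi_S}. u\,\sect,w\rangle\, e^{-\bphi-\phi_S}\abs{\psi_S}^{-1-\eps}$.
\item By Cauchy--Schwarz and $\abs{\psi_S}\ge 1$, it is at most $C\eps^{1/2}\norm{w}$ times the square root of $\eps\int\abs{\idxup{\diff\psi_S}. u}^2e^{-\bphi}\abs{\psi_S}^{-1-\eps}$.
\item That last quantity is an lc-measure-type integral, which stays bounded by the residue computation of Proposition \ref{prop:residue-formula-classical-kernel}. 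Hence the error tends to $0$.
\end{itemize}

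There is also a smaller slip on $S\setminus B$: replacing $\abs{\partial\psi_S}^2_\omega$ by $\abs{\sect}^{-2}$ is too lossy where components of $S$ meet. Near double points of $S$, the integral $\int\abs{\sect}^{-2}\abs{\log\abs{\sect}^2}^{-2-2\eps}$ is of order $1/\eps$, which is still harmless. Near points where three components meet, its part over $\set{\abs{\psi_S}\le 2^{1/\eps}}$ grows like $2^{1/\eps}$, which destroys your $O(\eps)$ bound. Instead, keep $\abs{\partial\psi_S}^2_\omega\le C\bigl(1+\sum_j\abs{z_j}^{-2}\bigr)$ and use $\abs{\psi_S}\ge\max\set{1,\log\abs{z_j}^{-2}-C}$ for each $j$ separately. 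Each term is then integrable uniformly in $\eps$.
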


  Let $\psi_\bullet$ be either $\psi_{\nu \cdot S}$ or $\psi_{\mu
    \cdot B}$, and $\sm\vphi_\bullet$ be $\sm\vphi_{\nu \cdot S}$ or
  $\sm\vphi_{\mu \cdot B}$ (defined as in Notation
  \ref{notation:potentials}) accordingly.
  Using again the cut-off function $\theta_{\eps'}$ with $\eps' > 0$, one can see that the
  twisted Bochner--Kodaira formula, with the
  twisting function $\abs{\psi_\bullet}^{1-\eps}$ for any $\eps > 0$, is
  valid for any harmonic form in $\Harm'{L'},_{\rs\omega}$ (see the
  proof of 
  \cite{Chan&Choi_injectivity-I}*{Prop.~3.2.8} or 
  \cite{Chan&Choi&Matsumura_injectivity-II}*{Prop.~3.2.6}).
  The following equality is obtained from the twisted Bochner--Kodaira
  formula together with the conclusion of Proposition
  \ref{prop:consequence-of-positivity}.

  \begin{prop}[\cite{Chan&Choi_injectivity-I}*{Prop.~3.2.8} or 
    \cite{Chan&Choi&Matsumura_injectivity-II}*{Prop.~3.2.6}]
    \label{prop:consequence-of-tBK}

    Under the assumption that $\ibddbar\bphi \geq 0$, every harmonic form $u \in
    \Harm'{L'},_{\rs\omega}$ satisfies
    \begin{equation*}
      \lim_{\eps \tendsto 0^+} \eps \int_{\mathrlap{X^\circ}} \;\;
      \frac{
        \abs{\idxup{\diff\psi_\bullet} . u}_{\vphilist}^2
      }{\abs{\psi_\bullet}^{1+\eps}}
      = \int_{X^\circ} \idxup{\ibddbar\sm\vphi_\bullet}
      \ptinner{u}{u}_{\vphilist}
      \; .
    \end{equation*}
    The limit on the left-hand side therefore exists (and is thus finite)
    because $u$ is $L^2$ with respect to $\vphilist$ on $X^\circ$ and
    $\ibddbar\sm\vphi_\bullet$ is a smooth form on $X$.
  \end{prop}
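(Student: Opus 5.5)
We prove the equality through the twisted Bochner--Kodaira identity with twisting function $\eta := \abs{\psi_\bullet}^{1-\eps}$ applied to $u$. Because $B$ and $S$ are removed, or are singular, one first cuts off near them: work with $\theta_{\eps'} u$, where $\theta_{\eps'} := \theta\circ\frac{1}{\abs{\psi_S}^{\eps'}}$ (and use completeness of $\rs\omega$ on $X^\circ$ near $B$), and then let $\eps' \to 0^+$ before $\eps \to 0^+$.

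The identity for a compactly supported smooth form $w$ in the domains of $\dbar$ and $\dbadj$ reads
\begin{equation*}
  \norm{\sqrt{\eta}\,\dbadj w}^2 + \norm{\sqrt{\eta}\,\dbar w}^2
  = \norm{\sqrt\eta\,\nabla^{(0,1)} w}^2
  + \int \idxup{\eta\,\ibddbar\bphi - \ibddbar\eta}\ptinner{w}{w}
  + 2\operatorname{Re}\inner{\idxup{\diff\eta}.w}{\dbadj w} \; .
\end{equation*}
Since $u$ is harmonic, $\dbar u = 0 = \dbadj u$. By Proposition \ref{prop:consequence-of-positivity}, $\nabla^{(0,1)}u = 0$ and $\idxup{\ibddbar\bphi}\ptinner uu = 0$ on $X^\circ$. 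Hence, after the cut-offs are removed, every term vanishes except the curvature term of $\eta$ itself:
\begin{equation*}
  0 = -\int_{X^\circ}\idxup{\ibddbar\abs{\psi_\bullet}^{1-\eps}}\ptinner uu \; .
\end{equation*}
For this step it is enough to show that the error terms coming from $\dbar\theta_{\eps'}$ tend to $0$. They involve $\abs{\dbar\theta_{\eps'}}\abs u$, and $\abs{\dbar\theta_{\eps'}}$ has support where $\abs{\psi_S}^{\eps'}\in[1,2]$, with size $\lesssim \eps'\abs{\diff\psi_S}/\abs{\psi_S}$, which is bounded in $\rs\omega$-norm times $\eps'$. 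So these terms are $O(\eps')\norm u^2 \to 0$.

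Next, compute directly, using $\psi_\bullet\le -1$:
\begin{equation*}
  -\ibddbar\abs{\psi_\bullet}^{1-\eps}
  = (1-\eps)\abs{\psi_\bullet}^{-\eps}\ibddbar\psi_\bullet
  + \eps(1-\eps)\frac{i\diff\psi_\bullet\wedge\dbar\psi_\bullet}{\abs{\psi_\bullet}^{1+\eps}} \; .
\end{equation*}
On $X^\circ$ we have $\ibddbar\psi_\bullet = -\ibddbar\sm\vphi_\bullet$. For $\psi_\bullet = \psi_{\mu\cdot B}$ this holds because $B$ is excluded. For $\psi_{\nu\cdot S}$, the current $[S]$ carries no mass for $L^2$ forms; to justify this, use the cut-off $\theta_{\eps'}$ again. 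Moreover, for $(n,q)$-forms, $\idxup{i\diff\psi\wedge\dbar\psi}\ptinner uu = \abs{\idxup{\diff\psi_\bullet}.u}^2$. Substituting gives
\begin{equation*}
  \eps(1-\eps)\int_{X^\circ}\frac{\abs{\idxup{\diff\psi_\bullet}.u}^2}{\abs{\psi_\bullet}^{1+\eps}}
  = (1-\eps)\int_{X^\circ}\frac{\idxup{\ibddbar\sm\vphi_\bullet}\ptinner uu}{\abs{\psi_\bullet}^{\eps}} \; .
\end{equation*}

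Finally, let $\eps\to0^+$. The right-hand integrand is dominated by $C\abs u^2$, which is integrable, because $\ibddbar\sm\vphi_\bullet$ is smooth on $X$ and $\rs\omega\ge\omega$, so it is bounded in $\rs\omega$-norm. Also $\abs{\psi_\bullet}^{-\eps}\to1$ pointwise. Dominated convergence therefore gives the limit $\int_{X^\circ}\idxup{\ibddbar\sm\vphi_\bullet}\ptinner uu$. Dividing by $1-\eps$, the left-hand limit exists and equals this value.

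The main obstacle is justifying the twisted identity for the noncompactly supported $u$. This needs the finiteness of the weighted gradient terms uniformly along the cut-off, and control of the cross term $\inner{\idxup{\diff\eta}.\theta_{\eps'}u}{\dbadj(\theta_{\eps'}u)}$. That term reduces to $\inner{\idxup{\diff\eta}.u}{\idxup{\diff\theta_{\eps'}}.u}$, which must be bounded via Cauchy--Schwarz by the (a priori finite for each fixed $\eps$) weighted term times $O(\eps')$. Here I would follow \cite{Chan&Choi_injectivity-I}*{Prop.~3.2.8}.
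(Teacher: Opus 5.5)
Your route is the paper's own. The paper derives the identity from the twisted Bochner--Kodaira formula with twist $\eta=\abs{\psi_\bullet}^{1-\eps}$, made valid for $u$ by the cut-off $\theta_{\eps'}$, together with Proposition \ref{prop:consequence-of-positivity}. Your computation of $-i\partial\bar\partial\eta$ and your final dominated-convergence step are fine.

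However, the one step you actually argue is argued incorrectly. You claim that $\abs{\bar\partial\theta_{\eps'}}\le C\eps'\abs{\partial\psi_S}/\abs{\psi_S}$ is bounded in $\rs\omega$-norm, so that the cut-off errors are $O(\eps')\norm{u}^2$. This fails for two reasons:
\begin{itemize}
\item The metric $\rs\omega=2\omega+i\partial\bar\partial\frac{1}{\log\abs{\ell\psi_B}}$ has Poincar\'e-type growth only along $B$. Near $S\setminus B$ it is a smooth metric quasi-isometric to $\omega$, which is why the paper stresses that it is not complete on $X^\circ\setminus S$. With $S=\{z=0\}$ locally, $\abs{\partial\psi_S}_{\rs\omega}/\abs{\psi_S}\sim 1/(\abs z\,\abs{\log\abs z^2})$. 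This is unbounded on the support of $d\theta_{\eps'}$, which is contained in $\{1\le\abs{\psi_S}\le 2^{1/\eps'}\}$.
\item You dropped the weight. The errors are $\int\eta\abs{d\theta_{\eps'}}^2\abs u^2$ together with a cross term $\int\abs{\partial\eta}\,\abs{d\theta_{\eps'}}\,\abs u^2$. Here $\eta$ is unbounded along $S$ when $\psi_\bullet=\psi_{\nu\cdot S}$.
\end{itemize}

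The step can be repaired, and the repair explains why $\eps'\to0^+$ must be taken first. Since $u$ is smooth on $X^\circ$, $\abs u^2e^{-\bphi}$ is locally bounded near $S\setminus B$. On such a neighbourhood the two errors are bounded by $C\eps'^2\int\abs{\partial\psi_S}^2\abs{\psi_S}^{-1-\eps-2\eps'}$ and $C\eps'\int\abs{\partial\psi_S}^2\abs{\psi_S}^{-1-\eps-\eps'}$. In snc coordinates, $\abs{\partial\psi_S}^2\abs{\psi_S}^{-1-\delta}\le C\sum_i\abs{z_i}^{-2}\abs{\log\abs{z_i}^2}^{-1-\delta}$, whose local integral is $O(1/\delta)$. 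Hence the errors are $O(\eps'^2/\eps)$ and $O(\eps'/(\eps+\eps'))$. They therefore vanish as $\eps'\to0^+$ for fixed $\eps$, but not uniformly in $\eps$.

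Near $S\cap B$, local boundedness must be replaced by the description of the singularities of $u$ along $B$ in Proposition \ref{prop:singularities-along-FM} together with Fubini, as in the paper's residue computations. Near $B$ itself, when $\psi_\bullet=\psi_{\mu\cdot B}$, completeness of $\rs\omega$ gives cut-offs with small gradient. The resulting errors, however, are weighted by the unbounded twist, so some information on $u$ near $B$ is again needed. That is the part you, like the paper, take from the references.
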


  

  The singularities of $u \in \Harm'{L'}$ along $X \setminus X^\circ =
  B$ can be given as follows. 

  \begin{prop}[\cite{Matsumura_injectivity-lc}*{Thm.~3.3} and
    \cite{Chan&Choi_injectivity-I}*{Thm.~2.5.1 and Prop.~3.3.1 and 3.3.2}]
    \label{prop:singularities-along-FM}
    
    Given that $\ibddbar\bphi \geq 0$ and $u \in \Harm'{L'}$, it
    follows that $*u$ is holomorphic on $X$, where $*$ is the Hodge
    $*$-operator with respect to $\rs\omega$.
    This implies that
    \begin{equation*}
      \text{coef.~of } u
      \text{ and } \idxup{\diff\psi_{\nu\cdot S}} . u \sect
      \in \smooth_X \left[
        \abs{\psi_{B}}^{\pm} \: ,
        \:\paren{\log\abs{\ell \psi_{B}}}^{\pm} \: ,
        \:\frac 1{\abs{\sect[b]_j}} \colon  j \in I_{B}
      \right] \quad\text{ on } X \; .
    \end{equation*}
  \end{prop}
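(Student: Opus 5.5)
The plan is to first obtain holomorphicity of $*u$ on $X^\circ$ from Proposition \ref{prop:consequence-of-positivity}, then extend $*u$ holomorphically across $B$, and finally invert the Hodge $*$-operator using an explicit description of $\rs\omega$ near $B$.

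\emph{Holomorphicity on $X^\circ$.} By Proposition \ref{prop:consequence-of-positivity}, every $u \in \Harm'{L'}$ satisfies $\nabla^{(0,1)}u = 0$ on $X^\circ$. Since $\rs\omega$ is Kähler on $X^\circ$, the Hodge $*$-operator is parallel, so it commutes with the Chern connection of $(L',e^{-\bphi})$ tensored with the Levi-Civita connection of $\rs\omega$. Hence $\nabla^{(0,1)}(*u) = 0$. Here $*u$ is an $L'$-valued $(n-q,0)$-form, so $\nabla^{(0,1)}(*u) = \dbar(*u)$, and therefore $*u$ is holomorphic on $X^\circ$. Pointwise $\abs{*u}_{\rs\omega} = \abs{u}_{\rs\omega}$, so $*u$ is $L^2$ on $X^\circ$ with respect to $\bphi$ and $\rs\omega$.

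\emph{Extension across $B$.} I expect this to be the main obstacle, because the $L^2$ control is measured with the non-smooth metric $\rs\omega$. I would work in a coordinate chart $V$ adapted to the snc divisor $B+S$, with $B = \set{z_1 \dotsm z_k = 0}$. There I would write $\rs\omega$ explicitly: it is quasi-isometric to $\omega$ plus Poincaré-type terms
\begin{equation*}
  \frac{\idxup{dz_j}[]\wedge d\cl z_j}{\abs{z_j}^2 \paren{\log\abs{\ell\psi_B}}^2\abs{\psi_B}^2} \; ,
\end{equation*}
up to cross terms. All coefficients, and the inverse matrix as computed by cofactors, then lie in the ring generated by smooth functions, $\abs{\psi_B}^{\pm}$, $\paren{\log\abs{\ell\psi_B}}^{\pm}$ and $1/\abs{\sect[b]_j}$.
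The comparison of norms of $(n-q,0)$-forms under $\rs\omega$ and $\omega$ loses at most powers of $\abs{\psi_B}\log\abs{\ell\psi_B}$ and of $\abs{z_j}$. Using $e^{-\bphi} \geq c > 0$ (as $\bphi$ is psh, hence locally bounded above), it follows that the $\omega$-coefficients of $*u$ are holomorphic on $V \setminus B$ and lie in $L^2\paren{V,\prod_j \abs{z_j}^{2-\delta}}$ for every $\delta>0$. A holomorphic function in this space has no poles along $\set{z_j=0}$: expanding in Laurent series in $z_j$ shows that negative powers are not integrable. Thus $*u$ extends holomorphically to all of $X$.

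\emph{Recovering $u$ and the contraction.} Since $*$ is invertible, $u = \pm *(*u)$, whose coefficients are polynomial in the holomorphic (hence smooth) coefficients of $*u$ and in the entries of $\rs\omega$, $\rs\omega^{-1}$ and $\det\rs\omega$. All of these lie in $\smooth_X\left[\abs{\psi_B}^{\pm},\paren{\log\abs{\ell\psi_B}}^{\pm},1/\abs{\sect[b]_j}\right]$ by the explicit computation above. For the second claim, write
\begin{equation*}
  \diff\psi_{\nu\cdot S} = \sum_i \nu_i \frac{d\sect_i}{\sect_i} - \diff\sm\vphi_{\nu\cdot S} \; .
\end{equation*}
Multiplying by $\sect$ cancels every pole $1/\sect_i$. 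The raising of indices via $\rs\omega^{-1}$ again stays within the same ring, because $B+S$ is snc and the singular part of $\rs\omega$ involves only the $B$-directions. This gives the stated regularity of $\idxup{\diff\psi_{\nu\cdot S}} . u \sect$.
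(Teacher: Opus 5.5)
Your overall plan has the same three steps as the cited results: holomorphicity of $*u$ on $X^\circ$ from $\nabla^{(0,1)}u=0$ and the parallelism of $*$; an $L^2$ extension across $B$; then recovering $u=\pm{*}{*}u$ from the explicit $\widetilde\omega$. Your first step is fine. The gap is in the extension across $B$. You only place the $\omega$-coefficients of $*u$ in $L^2(V,\prod_j|z_j|^{2-\delta})$, and then claim that a holomorphic function in this space has no poles. That claim is false: for $h=1/z_1$ one has $|h|^2|z_1|^{2-\delta}=|z_1|^{-\delta}$, which is integrable near $z_1=0$. With that weight, the Laurent argument only shows that $z_1\cdots z_k\,({*u})$ is holomorphic, i.e.\ $*u$ may still have simple poles along $B$. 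So you have not proved that $*u$ is holomorphic on $X$.

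The missing observation is that there is no loss at all: for $(p,0)$-forms the comparison goes in the favourable direction. At a point, diagonalise $\widetilde\omega\ge\omega$ simultaneously, with eigenvalues $\lambda_a\ge 1$. For $\alpha=\sum_{|I|=n-q}\alpha_I\,dz_I$ this gives
\[
  |\alpha|^2_{\widetilde\omega}\,dV_{\widetilde\omega}
  =\sum_I|\alpha_I|^2\prod_{a\notin I}\lambda_a\,dV_\omega
  \;\ge\;|\alpha|^2_\omega\,dV_\omega .
\]
Apply this to $\alpha={*u}$, using $|{*u}|_{\widetilde\omega}=|u|_{\widetilde\omega}$ and $e^{-\bphi}\ge c>0$ locally. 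You get $\int_{V\setminus B}|{*u}|^2_\omega\,dV_\omega\le c^{-1}\|u\|^2<\infty$, which is unweighted $L^2$ with respect to $\omega$. Since $\int|z_j|^{2m}$ diverges near $z_j=0$ for every $m\le -1$, the $L^2$ Riemann extension theorem now gives a holomorphic extension of $*u$ to $X$.

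Two smaller points on your third step. First, your model of $\widetilde\omega$ is inaccurate. The singular part of $\sqrt{-1}\,\partial\bar\partial\frac{1}{\log|\ell\psi_B|}$ is a bounded positive multiple of $\frac{\sqrt{-1}\,\partial\psi_B\wedge\bar\partial\psi_B}{\psi_B^2(\log|\ell\psi_B|)^2}$, which has rank one. Where several components of $B$ meet, $\widetilde\omega$ is therefore not quasi-isometric to $\omega$ plus diagonal Poincar\'e terms. Second, your cofactor argument tacitly needs $1/\det\widetilde\omega$ to lie in the stated ring. This is not automatic and should be checked against the explicit form of $\widetilde\omega$ rather than asserted.
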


  As $\lcdata<X>[\phi_S]$ is in the snc configuration and $\bphi$ contains no
  lc centres of $(X,S)$, the singularities of $u$ along $X \setminus
  X^\circ$ given in Proposition \ref{prop:singularities-along-FM} does
  not affect the residue computation on the lc centres of $(X,S)$ in
  view of Fubini's theorem (see
  \cite{Chan&Choi_injectivity-I}*{Thm.~2.6.1 and pf.~of
    Prop.~3.3.2}).
  Let $\smooth_{X\: c}$ denote the sheaf of germs of smooth functions
  on $X$ with compact support and let the ad hoc notation
  ``$\smooth_X[\rs\omega^{\pm}]$'' temporarily denote the sheaf of
  algebra in Proposition \ref{prop:singularities-along-FM} for
  convenience.
  Further let ``$\smooth_X[\rs\omega^{\pm}] * \mtidlof{\bphi}$'' denote
  the multiplier ideal sheaf defined in $\smooth_X[\rs\omega^{\pm}]$
  in place of $\holo_X$, i.e.~the ideal sheaf of germs of functions in
  $\smooth_X[\rs\omega^{\pm}]$ which are locally $L^2$ with respect to
  $\bphi$.
  One has the following local residue formula.

  \begin{prop}[\cite{Chan&Choi&Matsumura_injectivity}*{Prop.~2.3.1 and
      2.3.2}]
    \label{prop:residue-formula-classical-kernel}
    
    Given any open set $V \subset X$ and any compactly
    supported section $f \in \logKX.[L'] \otimes \smooth_{X
      \:c}[\rs\omega^{\pm}] * \aidlof|1|{\bphi}[\psi_S] \paren{V}$
    such that $\Res^1(f) =
    \paren{\PRes[\lcS|1|[b]](\frac{f}{\sect})}_{b \in \Iset|1|} =:
    \paren{g_b}_{b\in\Iset|1|}$, one has, 
    for any $\xi \in \logKX.[L'] \otimes \smooth_{X}[\rs\omega^{\pm}]
    * \mtidlof{\bphi} \paren{V}$,
    \begin{equation*}
      \left.
        \begin{aligned}
          \lim_{\eps \tendsto 0^+} \eps \int_V \inner{\xi}{f}
          \:e^{-\phi_S-\bphi} e^{-\eps\abs{\psi_{\nu \cdot S}}} &
          \\
          \lim_{\eps \tendsto 0^+} \eps \int_V \frac{
            \inner{\xi}{f}
            \:e^{-\phi_S-\bphi}
          }{\abs{\psi_{\nu \cdot S}}^{1+\eps}}  &
        \end{aligned}
      \right\}
      =\sum_{b \in \Iset|1|} \frac{\pi}{\nu_b}
        \int_{\lcS|1|[b] \cap V} \inner{\frac{\rs*\xi_b}{\sect_{(b)}}}{\: g_b}
        \:e^{-\bphi} \;\;  < \infty \; ,
    \end{equation*}
    where
    $\nu_b := \lelong{\psi_{\nu \cdot S}}[\lcS|1|[b]]$ and
    \begin{equation*}
      \rs*\xi_b := \PRes[\lcS|1|[b]](\frac{\xi}{\sect}) \cdot \sect_{(b)}
      \in K_{\lcS|1|[b]} \otimes \Diff_b S \otimes \res{L'}_{\lcS|1|[b]} \otimes
      \smooth_{\lcS|1|[b]\:c}[\rs\omega^{\pm}] *
      \mtidlof<\lcS|1|[b]>{\bphi} \paren{\lcS|1|[b] \cap V} \; .
    \end{equation*}    
  \end{prop}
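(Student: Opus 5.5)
The formula is purely local, so I will reduce it to the snc model computation near each $1$-lc centre and then sum over $b \in \Iset|1|$.

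\emph{Localisation.} Using a partition of unity subordinate to a cover of $\supp f$ by small admissible coordinate charts, it suffices to assume that $V$ is a polydisc. I then split into two cases. In the first case $V$ meets no $\lcS|1|[b]$. Then $f \in \aidlof|0|$ near $\supp f$, and $f\cdot\bar\xi\, e^{-\phi_S-\bphi}$ is integrable. Both $\eps\, e^{-\eps\abs{\psi_{\nu\cdot S}}}$ and $\eps/\abs{\psi_{\nu\cdot S}}^{1+\eps}$ are bounded by $\eps$, so dominated convergence gives limit $0$. In the second case $V$ meets components of $S$. I choose coordinates with $S = \set{z_1\dotsm z_r=0}$ on $V$, and $B$, $P_{\bphi}$ given by other coordinate hyperplanes. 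I write $\abs{\psi_{\nu\cdot S}} = \sum_i \nu_i\paren{-\log\abs{z_i}^2 + \sm\vphi_i}$ with $\sm\vphi_i$ smooth.

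\emph{Reduction to one divisor.} Since $f \in \aidlof|1|{\bphi}[\psi_S] = \mtidlof{\bphi}\cdot\defidlof{\lcc|2|'}$ (the decomposition from \cite{Chan_adjoint-ideal-nas}*{Thm.~4.1.2 (1)}, extended to coefficients in $\smooth_X[\rs\omega^{\pm}]$), I can write $f$ locally as a sum of terms $\frac{\sect}{z_b}\, h_b$ plus a term in $\sect\cdot(\dots)$. Here $\sect\cdot(\dots)$ has integrable pairing against $\xi$, so it contributes $0$ as in the first case. For the term with $z_b$, the integrand is $\inner{\xi}{f}e^{-\phi_S-\bphi}$. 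Its dependence on the other variables is integrable, because the $\abs{z_i}^{-2}$ for $i\ne b$ are cancelled by the factor $\sect/z_b$. The only singular factor is $\abs{z_b}^{-2}$.

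\emph{One-variable computation.} By Fubini I integrate in polar coordinates in $z_b$. The key identity is
\begin{equation*}
  \lim_{\eps\to0^+}\eps\int_{\abs{z_b}<r} \frac{g(z_b)}{\abs{z_b}^2}\, e^{-\eps \nu_b(-\log\abs{z_b}^2 + c)}\, d\lambda(z_b)
  = \frac{\pi}{\nu_b}\, g(0),
\end{equation*}
and the same holds with $\abs{\psi}^{-1-\eps}$ in place of the exponential weight. This follows by substituting $t = -\log\abs{z_b}^2$: $\eps\int^\infty e^{-\eps\nu_b t}\,dt\to 1/\nu_b$, and $\eps\int^\infty (\nu_b t)^{-1-\eps}\,dt\to1/\nu_b$. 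The remaining summands $\nu_i\abs{\log\abs{z_i}^2}$ with $i\ne b$ enter only through factors like $e^{-\eps(\dots)}$, which tend to $1$ pointwise. They are dominated because $\psi_{\nu\cdot S}\le -1$ and all such factors are bounded by $1$, or by $\abs{\nu_b\log\abs{z_b}^2}^{-1-\eps}$ in the second kernel. Identifying the boundary value, $\frac{f}{\sect}$ has Poincaré residue $g_b$ on $\lcS|1|[b]$, and $\xi/\sect$ has residue $\rs*\xi_b/\sect_{(b)}$. The remaining weight $e^{-\phi_S}$ restricts to $e^{-\phi_{(b)}}\cdot\abs{z_b}^{-2}$, which yields the pairing $\inner{\frac{\rs*\xi_b}{\sect_{(b)}}}{g_b}e^{-\bphi}$ on $\lcS|1|[b]$. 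Summing over $b$ and over the partition of unity gives the formula. Finiteness of the right-hand side follows from Cauchy--Schwarz together with $g_b \in \mtidlof<\lcS|1|[b]>{\bphi}$ and $\rs*\xi_b/\sect_{(b)}$ being $L^2$ by \eqref{eq:RTF-at-0}.

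\emph{Main obstacle.} The hard step is justifying dominated convergence and Fubini when $f$ and $\xi$ have coefficients in $\smooth_X[\rs\omega^{\pm}]$. These contain $\abs{\psi_B}^{\pm}$, $(\log\abs{\ell\psi_B})^{\pm}$ and $1/\abs{\sect[b]_j}$ along $B$. I would handle this as the paper indicates, following \cite{Chan&Choi_injectivity-I}*{Thm.~2.6.1}. Since $B$ shares no components with $S$ and $B+S$ is snc, these singular factors depend only on coordinates transverse to the $z_b$-direction. They can therefore be absorbed into the $L^2$ condition with respect to $\bphi$ on the slices, and they do not interact with the $\abs{z_b}^{-2}$ singularity that drives the limit.
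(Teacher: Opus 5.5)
Your route is essentially the paper's. Its proof defers to the local snc residue computation of the cited Chan--Choi--Matsumura propositions and adds only the two points you also isolate: the factor $1/\nu_b$ coming from the Lelong numbers, and Fubini to neutralise the $B$-singularities of $\smooth_X[\rs\omega^{\pm}]$.

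\textbf{The finiteness step is wrong as written.} The bound \eqref{eq:RTF-at-0} is about sections of the adjoint ideal. Your $\xi$ lies only in $\smooth_X[\rs\omega^{\pm}]*\mtidlof{\bphi}$, so $\rs*\xi_b/\sect_{(b)}=\PRes[\lcS|1|[b]](\xi/\sect)$ is in general \emph{not} $L^2$ on $\lcS|1|[b]$. For example, with $S=\{z_1z_2=0\}$, $\xi=dz_1\wedge dz_2$ and $b=1$, it equals $\pm\,dz_2/z_2$ on $\{z_1=0\}$. So Cauchy--Schwarz in the form you propose fails. What is $L^2$ (indeed locally bounded away from $B$) is $\rs*\xi_b$ itself, as the statement records, together with $g_b$. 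The pairing still converges for a different reason:
\begin{itemize}
\item $1/\sect_{(b)}=\prod_{i\neq b}z_i^{-1}$ has only simple poles along the sets $\lcS|1|[b]\cap S_i$;
\item $|z_i|^{-1}$ is locally integrable;
\item near $B$ you use Fubini as elsewhere.
\end{itemize}
This is exactly the fact behind your reduction step. There the factor $\sect/z_b$ does not cancel $|z_i|^{-2}$; it only lowers it to $|z_i|^{-1}$. The same fact supplies the integrable majorant in the transverse variables for your dominated convergence, so the finiteness should be argued this way.

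\textbf{A smaller point concerns the decomposition of $f$.} Suppose $\smooth_{X\:c}[\rs\omega^{\pm}] * \aidlof|1|{\bphi}[\psi_S]$ is understood, like $\smooth_X[\rs\omega^{\pm}]*\mtidlof{\bphi}$, through the $L^2$ condition imposed on non-holomorphic germs. Then writing $f=\sum_b\frac{\sect}{z_b}h_b$ with $h_b\in\smooth_X[\rs\omega^{\pm}]$ is not automatic. Take $S=\{z_1z_2=0\}$ away from $B$ and a cut-off $\rho$. The form $\rho\,\bar z_1\,dz_1\wedge dz_2$ satisfies the condition, but it is not of the form $z_2h_1+z_1h_2$ with smooth $h_i$: restricting to $z_2=0$ would force $h_2=\rho\,\bar z_1/z_1$ there. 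You can fix this in either of two ways.
\begin{itemize}
\item Work with the $\smooth_X[\rs\omega^{\pm}]$-module generated by the holomorphic adjoint ideal. This is the situation wherever the proposition is applied in the paper.
\item Allow locally bounded coefficients $h_b$ such as $\rho\,\bar z_1/z_1$, which are smooth near generic points of the relevant $\lcS|1|[b]$. Your one-variable limit goes through unchanged for these.
\end{itemize}
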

  \begin{proof}[Remarks on the proof]
    The proofs of both statements are the same as those of
    \cite{Chan&Choi&Matsumura_injectivity}*{Prop.~2.3.1 and 2.3.2} with some
    extra care taken for the generic Lelong numbers $\nu_p$ and the
    relaxation on the coefficients of $f$ and $\xi$ from $\smooth_X$
    to $\smooth_X [\rs\omega^{\pm}]$.
    This change does not affect the residue computation in view of Fubini's
    theorem.

    Note also that, in
    \cite{Chan&Choi&Matsumura_injectivity}*{Prop.~2.3.1 and 2.3.2}, the
    coefficients are allowed to be in $\smooth_{X\,*}$ (locally bounded germs in
    $\smooth_X\left[\frac 1{\abs{\sect_i}} \colon  i\in \Iset||
    \right]$, see \cite{Chan&Choi&Matsumura_injectivity}*{\S 2.3 and
      footnote 2}).
    It is convenient to consider such coefficients when handling
    residues of $\dbar\psi_S \otimes u$ (see
    \cite{Chan&Choi_injectivity-I}*{proof of Prop.~3.2.3}), but it is
    not necessary when dealing with residues of
    $\idxup{\diff\psi_S}. u$ or $\idxup{\diff\psi_{\nu\cdot S}}. u$, as we do in this paper.
  \end{proof}

  Note that all statements above and in previous sections for
  $(X,S)$ are applicable to $(\lcS, \Diff_p S)$ for any $p \in \Iset$.

}


  \subsection{Harmonic residues}
  \label{sec:harmonic-residue}


Keep writing $\aidlof* := \aidlof{\vphi_L}$, $\residlof* :=
\residlof{\vphi_L}$ and
\begin{equation*}
  \spH{\sheaf F} :=\cohgp q[X]{K_X\otimes L \otimes \sheaf F}
  \quad\text{ for any sheaf } \sheaf F \text{ on } X
\end{equation*}
for convenience.
Recall that, taking into account the $L^2$ Dolbeault isomorphism and
harmonic theory, one has
\begin{equation*}
  \spH{\residlof*} \isom \bigoplus_{p \in \Iset}
  \Harm<\lcS>{\logKX<\lcS>[L']}
  =: \spHarm{\residlof*}  \; .
\end{equation*}
For $q \geq 1$, let
\begin{equation*}
  \delta := \delta^{q-1} \colon
  \spH/q-1/{\residlof+1*} \to \spH{\residlof*}
\end{equation*}
be the connecting morphism in the long exact sequence induced from the
short exact sequence
\begin{equation*}
  \renewcommand{\objectstyle}{\displaystyle}
  \xymatrix@R=3.5ex{
    {0} \ar[r]
    &{\faidlof/-1*} \ar[r] \ar[d]_-{\Res^{\sigma}}^-{\isom}
    &{\faidlof+1/-1*} \ar[r]
    &{\faidlof+1/*} \ar[r] \ar[d]^-{\Res^{\sigma+1}}_-{\isom}
    &{0 \; .}
    \\
    &{\residlof*} 
    &&{\residlof+1*} 
  }
\end{equation*}
The image of $\delta$ plays a role in the proof of the
injectivity theorem (see \cite{Chan&Choi&Matsumura_injectivity}*{Proof
  of Thm.~3.4.1, Step IV}).
To compute it in terms of \v Cech cohomology, 
note that $\delta$ is induced from the \v Cech coboundary
operator.
For each $w =\paren{w_b}_{b\in \Iset+1} \in
\spHarm/q-1/{\residlof+1*}$, write
\begin{equation} \label{eq:Cech-Dolbeault-on-w_b}
  w_b \quad\;
  \overset{\mathclap{\text{\eqref{eq:Cech-Dolbeault-isom}}}}= \quad\;
  \dbar v_{b;(2)} + (-1)^{q-1} \frac{v_{b;(\infty)}}{\sect_{(b)}}
  \quad\text{ on } \lcS+1[b]' := \lcS+1[b] \cap X^\circ \; ,
\end{equation}
where
\begin{equation*}
  \frac{v_{b;(\infty)}}{\sect_{(b)}}
  := \paren{\dbar\rho}^{\idx q.1}
  \frac{\rs\gamma_{b;\:\idx 1.q}}{\sect_{(b)}}
  := \paren{\dbar\rho}^{\idx q.1} \alpha_{b;\:\idx 1.q}
  \quad\text{ on } \lcS+1[b] 
\end{equation*}
such that $v_{b;(2)} \in
\Ltwo/0,q-2/<\lcS+1[b]>{\logKX<\lcS+1[b]>[L']}_{\vphilist}$ and
$\set{\alpha_{b;\:\idx 1.q}}_{\idx 1,q \in I}$ is a \v Cech
$(q-1)$-cocycle representing $w_b$ with respect to the cover $\cvr V
\cap \lcS+1[b]$ (each $\alpha_{b;\:\idx 1.q}$ is holomorphic and is
(globally) $L^2$ with respect to $\bphi$ and $\rs\omega$ on
$\lcS+1[b] \cap V_{\idx 1.q}$).
(Write $\alpha_{b;\:\idx 1.q} =: \frac{\rs*\gamma_{b;\:\idx
    1.q}}{\sect_{(b)}}$ just to have a consistent notation with
\cite{Chan&Choi&Matsumura_injectivity}*{Prop.~2.3.3}.)
Choosing the liftings $\gamma_{\idx 1.q} \in \logKX[L] \otimes
\aidlof+1<X>*\paren{V_{\idx 1.q}}$ such that
\begin{equation*}
  \PRes[\lcS+1[b]](\frac{\gamma_{\idx 1.q}}{\sect_0 \sect})
  =
  \frac{\rs\gamma_{b;\:\idx 1.q}}{\sect_{(b)}} \quad\text{ for all } b
  \in \Iset+1 \; ,
\end{equation*}
the component of the image $\delta w$ on $\lcS$
can be represented by the \v Cech $q$-cocycle $\frac{\delta\rs\gamma_p}{\sect_{(p)}}$,
where
\begin{equation*}
  \frac{\rs\gamma_{p;\:\idx 1.q}}{\sect_{(p)}} :=
  \PRes[\lcS](\frac{\gamma_{\idx 1.q}}{\sect_0 \sect})
\end{equation*}
and $\delta\rs\gamma_p$
denotes the image of the cochain $\set{\rs\gamma_{p;\:\idx 1.q}}_{\idx
  1,q \in I}$ under the \v Cech coboundary operator.
In view of the \v Cech--Dolbeault map \eqref{eq:Cech-Dolbeault-isom},
the image $\delta w$ is then represented by $\paren{-\frac{\dbar
    v_{p;(\infty)}}{\sect_{(p)}}}_{p \in \Iset}$, where (see
\cite{Chan&Choi&Matsumura_injectivity}*{Proof of Thm.~3.4.1, Step IV})
\begin{equation*}
  - \frac{\dbar v_{p;(\infty)}}{\sect_{(p)}}
  :=
  - \frac{
    \dbar\paren{\paren{\dbar\rho}^{\idx q.1} \rs\gamma_{p;\:\idx 1.q}}
  }{\sect_{(p)}}
  = (-1)^q \paren{\dbar\rho}^{\idx q.0}
  \frac{\paren{\delta \rs*\gamma_p}_{\idx 0.q}}{\sect_{(p)}}
  \quad\text{ on } \lcS \; .
\end{equation*}
Note that this is $\dbar$-closed and $L^2$ with respect to $\bphi$ and
$\rs\omega$ on $\lcS \cap X^\circ$.
Denote by $\deltaH w$ the orthogonal projection to
$\spHarm{\residlof*}$ of $\paren{(-1)^q \frac{\dbar
    v_{p;(\infty)}}{\sect_{(p)}}}_{p\in \Iset}$, a Dolbeault
representative of $(-1)^{q-1} \delta w$.
Thus $\deltaH w$ is the harmonic representative of
$(-1)^{q-1} \delta w$.


Recall from
\cite{Chan&Choi&Matsumura_injectivity}*{Prop.~2.3.3} that, for any
$\sigma$-lc centre $\lcS$ and $(\sigma+1)$-lc centre $\lcS+1[b]$ such
that $\lcS+1[b] \subset \lcS$, the sign $\sgn{b:p}$ is defined by
\begin{equation*}
  \PRes[\lcS+1[b]] =\sgn{b:p} \:\PRes[\lcS+1[b] | \lcS] \circ
  \PRes[\lcS] \; ,
\end{equation*}
where $\PRes[\lcS+1[b] | \lcS]$ denotes the Poincar\'e residue map
from $\lcS$ to $\lcS+1[b]$.
Let $\nu_p \cdot S\ps$ be a divisor defined as in Section
\ref{sec:notation-in-snc} such that $\supp \parres{\nu_p \cdot
  S\ps}_{\lcS} = \Diff_p S$, and let $\psi\ps. :=\psi_{\nu_p \cdot S\ps}$ be
the function defined as in Notation \ref{notation:potentials}.
Given any collection of $u :=(u_p)_{p \in \Iset}$ of harmonic forms
$u_p \in \Harm<\lcS>{\logKX<\lcS>[L']}$ on $\lcS$ for each $p \in \Iset$, define
\begin{equation} \label{eq:definition-of-w}
  \HRes(u) := \paren{\HRes(u)_b}_{b \in \Iset+1}  \; ,
\end{equation}
where
\begin{equation*}
  \HRes(u)_b := \smashoperator[r]{
    \sum_{p\in\Iset \colon \lcS+1[b] \subset \lcS}
  } 
  \: \sgn{b:p} \:
  \PRes[\lcS+1[b] | \lcS](\idxup{\diff\psi\ps.}.  u_{p})
  \quad\text{ on $\lcS+1[b]$ for each } b \in \Iset+1 \; .
\end{equation*}
The collection $\HRes(u)$ is referred to as the
\emph{harmonic residue} of $u$ on $\lcc+1'$. 
If $\lcS+1[b] \cap V = \set{z_b = 0} \cap \lcS \cap V$ for some
coordinate function $z_b$ on some admissible open set $V$, one has
\begin{equation*}
  \PRes[\lcS+1[b] | \lcS](\idxup{\diff\psi\ps.}. u_p)
  =\PRes[\lcS+1[b] | \lcS](\idxup{\diff\phi\ps.}. u_p)
  = \nu_{b:p} \PRes[\lcS+1[b] | \lcS](\idxup{\diff\log\abs{z_b}^2}. u_p) \; ,
\end{equation*}
where $\nu_{b:p} := \lelong{\res{\psi\ps.}_{\lcS}\:}[\lcS+1[b]]$
(which is computed in $\lcS$).
This shows that \emph{the dependence of $\HRes(u)$ on the functions $\psi\ps.$
lies only on the choices of the coefficients $\nu_p$ in $\psi\ps.$'s}.


\begin{prop}[\cite{Chan&Choi&Matsumura_injectivity}*{Thm.~2.4.3 and
    Prop.~2.3.3}]
  \label{prop:res-formula-dbar-exact-dot-harmonic}
  For any harmonic $u := \paren{u_p}_{p \in \Iset} \in \bigoplus_{p \in \Iset}
  \Harm<\lcS>{\logKX<\lcS>[L']} = \spHarm{\residlof*} \isom
  \spH{\residlof*}$, under the assumption $\ibddbar\bphi \geq 0$, the
  harmonic residue $\HRes(u)$ is harmonic and represents a class in
  $\spH/q-1/{\residlof+1*}$.
  Furthermore, for any harmonic $w = \paren{w_b}_{b \in \Iset+1} \in
  \spHarm/q-1/{\residlof+1*} \isom \spH/q-1/{\residlof+1*}$, one has
  \begin{equation*}
    \iinner{\deltaH w}{u}_{\lcc<X^\circ>'}
    =\sigma_+ \iinner{w}{\HRes(u)}_{\lcc+1<X^\circ>'} \; , 
  \end{equation*}
  where $\sigma_+ :=\max\set{1,\sigma}$, $\lcc|\bullet|<X^\circ>' :=
  \lcc|\bullet|' \cap X^\circ$, and the
  inner products are given by the residue norms as in
  \eqref{eq:RTF-at-0}
  induced from $\bphi$, $\rs\omega$ and (Lelong numbers of) $\psi$.
\end{prop}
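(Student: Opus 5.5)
The plan is to prove the two assertions separately, following the pattern of \cite{Chan&Choi&Matsumura_injectivity}*{Thm.~2.4.3 and Prop.~2.3.3}. The only new feature is that $\psi_{\nu\cdot S}$ carries general generic Lelong numbers $\nu_i$, which gives the weights $\frac{\pi^\sigma}{(\sigma-1)!\,\vect\nu_p}$ in the residue norms.

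\emph{Harmonicity of $\HRes(u)$.} Fix $p \in \Iset$ and work on $\lcS$ with the pair $(\lcS, \Diff_p S)$; as remarked at the end of Section \ref{sec:harmonic-forms-pos-curv}, all results there apply to this pair. By Proposition \ref{prop:consequence-of-positivity} on $\lcS$, we have $\nabla^{(0,1)}u_p = 0$ and $\idxup{\ibddbar\bphi}\ptinner{u_p}{u_p} = 0$. By Lemma \ref{lem:su-harmonicity}, $u_p\sect\ps$ is harmonic with respect to $\bphi + \phi\ps$. Its contraction $\idxup{\diff\psi\ps.}.u_p$ is then $\dbar$-closed off $\Diff_pS$, and by Proposition \ref{prop:singularities-along-FM} its coefficients have at worst simple poles along $\Diff_p S$ and controlled singularities along $B$. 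Hence the Poincaré residue $\PRes[\lcS+1[b]|\lcS](\idxup{\diff\psi\ps.}.u_p)$ is a well-defined $\dbar$-closed $L^2$ form on $\lcS+1[b]\cap X^\circ$. Next we check $\dbadj$-closedness, i.e.\ that $*\HRes(u)_b$ is holomorphic. The point is that $*u_p$ is holomorphic and that contraction with $\diff\phi\ps.$ followed by residue corresponds, up to sign, to taking the Poincaré residue of $*u_p$ divided by $z_b$. This gives harmonicity of each summand, and hence of $\HRes(u)$. The class in $\spH/q-1/{\residlof+1*}$ then follows from $\spHarm/q-1/{\residlof+1*}\isom\spH/q-1/{\residlof+1*}$.

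\emph{The adjunction formula.} Since $\deltaH w$ is the orthogonal projection of $\paren{(-1)^q\frac{\dbar v_{p;(\infty)}}{\sect_{(p)}}}_p$ and $u$ is harmonic, we can drop the projection. Using \eqref{eq:RTF-at-0}, we write
\begin{equation*}
  \iinner{\deltaH w}{u}_{\lcc<X^\circ>'} = \sum_p (-1)^q\lim_{\eps\to0^+}\eps\int \inner{\dbar v_{p;(\infty)}\text{-lift}}{u\sect_0\sect}\,e^{-\vphi_L-\psi}\abs{\psi}^{-1-\eps}.
\end{equation*}
Equivalently, we work directly on each $\lcS$ with the weight $e^{-\phi\ps}\abs{\psi\ps.}^{-1-\eps}$. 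We then integrate by parts, moving $\dbar$ onto $u_p$ and the weight; the cut-offs $\theta_{\eps'}$ justify this as in Section \ref{sec:harmonic-forms-pos-curv}. The term with $\dbadj u_p$ vanishes by harmonicity, so only the term with $\idxup{\diff\abs{\psi\ps.}^{-\eps}}.u_p$ survives. This yields
\begin{equation*}
  \eps(1+\eps)\int \frac{\inner{v_{p;(\infty)}/\sect_{(p)}}{\idxup{\diff\psi\ps.}.u_p}}{\abs{\psi\ps.}^{2+\eps}}\cdots .
\end{equation*}
Proposition \ref{prop:consequence-of-tBK} guarantees that this limit is finite. Proposition \ref{prop:residue-formula-classical-kernel}, applied on $\lcS$ with $f = \idxup{\diff\psi\ps.}.u_p\,\sect\ps$ and $\xi = v_{p;(\infty)}$, converts the limit into a sum of integrals over the $\lcS+1[b]\subset\lcS$. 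The constants $\frac{\pi}{\nu_{b:p}}$, combined with the $\nu_{b:p}$ coming from $\idxup{\diff\psi\ps.}.$ and the prefactors $\frac{\pi^\sigma}{(\sigma-1)!\vect\nu_p}$, should produce exactly $\frac{\pi^{\sigma+1}}{\sigma!\vect\nu_b}$ times $\sigma_+$. The factor $\sigma_+$ arises from $\frac{\sigma!}{(\sigma-1)!}=\sigma$ for $\sigma\geq1$, and from the convention for $\sigma=0$.

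\emph{Matching with $w$ and the main obstacle.} On $\lcS+1[b]$ the boundary terms give $\inner{v_{b;(\infty)}/\sect_{(b)}}{\HRes(u)_b}$. Summing over $p$ with the signs $\sgn{b:p}$ reconstructs the restriction of the \v Cech cochain $\rs\gamma$, which is independent of $p$ by the choice of lifting $\gamma_{\idx 1.q}$. Adding back $\dbar v_{b;(2)}$, which pairs to zero with the harmonic $\HRes(u)_b$, recovers $w_b$ via \eqref{eq:Cech-Dolbeault-on-w_b}. I expect the main obstacle to be the bookkeeping in this step: tracking the signs $\sgn{b:p}$ and $(-1)^q$ together with the Lelong-number normalisations, and justifying the integration by parts near $B$. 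For the latter I would rely on the Fubini argument together with Proposition \ref{prop:singularities-along-FM}, as in \cite{Chan&Choi_injectivity-I}*{Thm.~2.6.1}.
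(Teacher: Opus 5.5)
Your outline has the same architecture as the paper's proof: drop the projection because $u$ is harmonic, regularise on each $\lcS$, integrate by parts against $u_p\sect_{(p)}$, evaluate with Proposition \ref{prop:residue-formula-classical-kernel}, reindex over $b$ with the signs $\sgn{b:p}$, and finish with \eqref{eq:Cech-Dolbeault-on-w_b} and the harmonicity of $\HRes(u)$. Your accounting for $\sigma_+$ is also correct. The paper does not prove harmonicity of $\HRes(u)$; it quotes it from \cite{Chan&Choi&Matsumura_injectivity}*{\S 2.4}. However, the regularisation step, which is exactly where the non-zero boundary term comes from, fails as you wrote it.

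The quantity $\iinner{\dbar v_{p;(\infty)}/\sect_{(p)}}{u_p}_{\lcS'}$ is an ordinary weighted $L^2$ pairing on $\lcS'$: weight $e^{-\bphi}$, times the constant $\frac{\pi^\sigma}{(\sigma-1)!\,\vect\nu_p}$. Rewriting it through \eqref{eq:RTF-at-0} as a limit over $X$ is not available, since that would require lifting the harmonic $u_p$ from $\lcS$ to $X$. Working on $\lcS$ with the weight $\eps\abs{\psi\ps.}^{-1-\eps}e^{-\phi_{(p)}}$ is also not "equivalent". Both $\dbar v_{p;(\infty)}/\sect_{(p)}$ and $u_p$ are $L^2$, and $\eps\abs{\psi\ps.}^{-1-\eps}\leq\eps$, so your regularised left-hand side tends to $0$. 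Your right-hand side $\eps(1+\eps)\int\abs{\psi\ps.}^{-2-\eps}\dotsm$ also tends to $0$, because $\abs{\psi\ps.}^{-2}$ already makes the product of the two simple poles along $\Diff_p S$ integrable. Moreover, that kernel is not one of the two covered by Proposition \ref{prop:residue-formula-classical-kernel}. As written, the computation only proves $0=0$.

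What you need is a factor that tends to $1$ pointwise, so that dominated convergence recovers the actual pairing, and whose $\dbar$-derivative is exactly a kernel of Proposition \ref{prop:residue-formula-classical-kernel}. The paper inserts $e^{-\eps\abs{\psi\ps.}}$:
\begin{equation*}
  e^{-\eps\abs{\psi\ps.}}\,\dbar v_{p;(\infty)}
  =\dbar\paren{e^{-\eps\abs{\psi\ps.}}\,v_{p;(\infty)}}
  -\eps\, e^{-\eps\abs{\psi\ps.}}\,\dbar\psi\ps.\wedge v_{p;(\infty)} \; .
\end{equation*}
The first term pairs to zero with $u_p\sect_{(p)}$ in the $e^{-\phi_{(p)}}$-weighted product, by Lemma \ref{lem:su-harmonicity}. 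The second term leaves $\eps\,e^{-\eps\abs{\psi\ps.}}v_{p;(\infty)}$ paired against $\idxup{\diff\psi\ps.}.u_p\sect_{(p)}$.

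Your phrase ``$\idxup{\diff\abs{\psi\ps.}^{-\eps}}.u_p$'' points to the factor $\abs{\psi\ps.}^{-\eps}$. That choice would also work, producing the other kernel $\eps\abs{\psi\ps.}^{-1-\eps}$. In either version, Proposition \ref{prop:residue-formula-classical-kernel} gives the limit directly, so Proposition \ref{prop:consequence-of-tBK} is not needed.

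Before applying the residue formula, the paper also writes $v_{p;(\infty)}=\paren{\dbar\rho}^{i_q\dotsm i_1}\rs*\gamma_{p;\,i_1\dotsm i_q}$ and moves $\paren{\dbar\rho}$ to the other side. This way $\xi=\rs*\gamma_p$ is holomorphic on $V_{i_1\dotsm i_q}$ and $f$ is compactly supported there. That local form is what lets the reindexing with $\rs*\gamma_b$ and $\sgn{b:p}$ go through.

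There is also a smaller slip in your harmonicity sketch. In general $\idxup{\diff\psi\ps.}.u_p$ is not $\dbar$-closed off $\Diff_p S$, because its $\dbar$ picks up $\ibddbar\sm\vphi\ps.$. You should use $\idxup{\diff\phi\ps.}.u_p$ instead; it has the same residues, as the paper notes after \eqref{eq:definition-of-w}.
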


\begin{proof}

  The fact that $\HRes(u)$ is a harmonic form in
  $\spHarm/q-1/{\residlof+1*} \isom \spH/q-1/{\residlof+1*}$ is proved
  in \cite{Chan&Choi&Matsumura_injectivity}*{\S 2.4}. 
  The proof of the equality is formally the same as that of
  \cite{Chan&Choi&Matsumura_injectivity}*{Prop.~2.3.3} (see also
  \cite{Chan&Choi&Matsumura_injectivity-II}*{Thm.~5.2.1}), with an extra
  care for the singularities of $u$ along $X \setminus X^\circ = B$
  and the coefficients $\nu_p$ in the function $\psi\ps.$. 
  A sketch is given below.

  Write $\lcS' := \lcS \cap X^\circ$ for any $\sigma \geq 0$ and any
  $p\in\Iset$. 
  Set $\iinner{\cdot}{\cdot}_{\lcS||[\bullet]', \phi_{(\bullet)}}
  :=\iinner{\cdot}{\cdot \:e^{-\phi_{(\bullet)}}}_{\lcS||[\bullet]'}$
  for $\bullet = p, b$.
  Abuse $\delta w$ to mean its representative $\paren{-\frac{\dbar
      v_{p;(\infty)}}{\sect_{(p)}}}_{p \in \Iset}$.
  The smooth form $v_{p;(\infty)}$ on $\lcS$ need not be $L^2$ with
  respect to the weight $e^{-\phi_{(p)}}$, so
  an integration by parts is done via the use of Proposition
  \ref{prop:residue-formula-classical-kernel} (with the knowledge of
  the singularities of $u_p$ along $\lcS \setminus \lcS'$ by
  Proposition \ref{prop:singularities-along-FM}), which yields 
  \begin{align*}
    &~\iinner{\delta w}{u}_{\lcc<X^\circ>'}
      = \sum_{p\in\Iset} \iinner{-\frac{\dbar v_{p;(\infty)}}{\sect_{(p)}} \:}{
      \: u_p
      }_{\lcS'}
      =-\sum_{p\in \Iset} \iinner{\dbar v_{p;(\infty)}}{ u_p
      \sect_{(p)}}_{\lcS', \phi_{(p)}}
    \\
    \xleftarrow{\eps \tendsto 0^+}
    &-\sum_{p \in \Iset} \iinner{
      e^{-\eps \abs{\psi\ps.}} \:\dbar v_{p;(\infty)}
      }{ u_p \sect_{(p)}}_{\lcS', \phi_{(p)}}
    \\
    =&-\sum_{p \in \Iset} \paren{
       \begin{multlined}[c][0.7\textwidth]
         \cancelto{0 \;\;\;(\because~u_p \text{ harmonic, Lemma
             \ref{lem:su-harmonicity}%
           })}{\iinner{
             \dbar\paren{e^{-\eps \abs{\psi\ps.}} \:v_{p;(\infty)}}
           }{ u_p \sect_{(p)}}_{\mathrlap{\lcS', \phi_{(p)}}}}
         \\ 
         - \eps \iinner{
           e^{-\eps \abs{\psi\ps.}} \:v_{p;(\infty)}
         }{\:\idxup{\diff\psi\ps.} . u_p \sect_{(p)}}_{\lcS', \phi_{(p)}}
       \end{multlined}
       }
    \\
    =&~\smashoperator{\sum_{\substack{\idx 1,q \in I \, , \\ p \in \Iset}}}
    \;\;\eps \:
       \iinner{
       e^{-\eps \abs{\psi\ps.}} \:
       \rs*\gamma_{p;\:\idx 1.q}
       }{\:
       \idxup{\diff\rho},[\idx 1.q] .
       \paren{\idxup{\diff\psi\ps.} . u_p \sect_{(p)}}
       }_{\lcS', \phi_{(p)}}
    \\
    \xrightarrow[
    \substack{\text{Prop.~\ref{prop:singularities-along-FM}}
    \\ \text{Prop.~\ref{prop:residue-formula-classical-kernel}} 
    }
    ]{\eps \tendsto 0^+} 
    &~\smashoperator{\sum_{\substack{p \in \Iset \, ,\\ \idx 1,q \in I}}}
    \;\;\sum_{k=\sigma +1}^{\mathclap{\sigma_{V_{\idx 1.q}}}}
    \sigma_+
      \iinner{
      \PRes[p(k)](
      \frac{\rs*\gamma_{p;\:\idx 1.q}}{\sect_{(p)}}
      )
      }{\:
      \idxup{\diff\rho},[\idx 1.q] .
      \PRes[p(k)](\idxup{\diff\psi\ps.} . u_p)
      }_{\lcS' \cap \set{z_{p(k)} =0}}
      \; ,
  \end{align*}
  where $\idxup{\diff\rho},[\idx 1.q] . \cdot$ is the adjoint
  of $\paren{\dbar\rho}^{\idx q.1} \cdot$, and
  $\PRes[p(k)]$ denotes the Poincar\'e residue map from $\lcS$ to
  $\lcS \cap \set{z_{p(k)}=0}$, in which $(z_1, \dots, z_n)$ is a
  holomorphic coordinate system such that $\lcS \cap V_{\idx 1.q}
  =\set{z_{p(1)} = \dotsm =z_{p(\sigma)} =0}$ and $\sect_{(p)}
  =z_{p(\sigma+1)} \dotsm z_{p(\sigma_{V_{\idx 1.q}})}$.
  Note also the appearance of the coefficient $\sigma_+$, as well as
  the absence of the Lelong number $\nu_{p(k)} :=
  \lelong{\psi\ps.}[\set{z_{p(k)} = 0}]$ as appeared in Proposition
  \ref{prop:residue-formula-classical-kernel}, comes from
  the different normalisations of the $L^2$ norms on various lc centres,
  namely, $\norm\cdot_X^2 := \int_X \dotsm$ and $\norm\cdot_{\lcS}^2
  := \frac{\pi^\sigma}{(\sigma -1)! \:\vect\nu_p} \int_{\lcS} \dotsm$ for every
  integer $\sigma \geq 1$.

  Following the argument in the proof of
  \cite{Chan&Choi&Matsumura_injectivity}*{Prop.~2.3.3}, the $(\sigma
  +1)$-lc centres $\lcS \cap \set{z_{p(k)} = 0}$ for $k = \sigma +1
  , \dots, \sigma_{V_{\idx 1.q}}$ in $V_{\idx 1.q}$
  can be re-indexed in terms of $b \in \Iset+1$ such that
  \begin{equation*}
    \lcS[p_{b,j}] \cap \set{z_{b(j)} = 0}
    = \lcS+1[b] \cap V_{\idx 1.q}
    \quad\text{ for } j = 1, \dots, \sigma +1 
  \end{equation*}
  and the summations transform as $\sum_{p \in \Iset} \sum_{k=\sigma
    +1}^{\sigma_{V}} \dotsm = \sum_{b \in
    \Iset+1} \sum_{j=1}^{\sigma +1} \dotsm$.
  With such choice of indexing, one has
  \begin{equation*}
    \frac{\rs*\gamma_{b;\: \idx 1.q}}{\sect_{(b)}}
    :=\PRes[\lcS+1[b]](\frac{\gamma_{\idx 1.q}}{\sect_0 \sect})
    =\sgn{b:p_{b,j}} \:
    \PRes[b(j)](\frac{\rs*\gamma_{p_{b,j};\:\idx
        1.q}}{\sect_{(p_{b,j})}})
  \end{equation*}
  (noticing that 
  $\sect_{(p_{b,j})} = z_{b(j)} \sect_{(b)}$).
  As a result, the expression in question becomes
  \begin{align*}
    &\smashoperator[r]{\sum_{\substack{b \in \Iset+1  ,\\ \idx 1,q \in I}}}
    \;\;\sum_{j=1}^{\sigma +1}
    \sigma_+
      \iinner{ \sgn{b:p_{b,j}}\:
      \frac{\rs*\gamma_{b;\:\idx 1.q}}{\sect_{(b)}}
      }{\: 
      \idxup{\diff\rho},[\idx 1.q] .
      \PRes[b(j)](\idxup{\diff\psi\ps._{p_{b,j}}} . u_{p_{b,j}})
      }_{\lcS+1[b]'}
    \\
    =&\smashoperator[r]{\sum_{\substack{\idx 1,q \in I \, , \\ b \in \Iset+1}}}
       \;\;\sigma_+
       \iinner{
       \frac{\paren{\dbar\rho}^{\idx q.1} \rs*\gamma_{b;\:\idx 1.q}}{\sect_{(b)}}
    \:}{  \smash{\sum_{j=1}^{\sigma +1}}\:
    \sgn{b:p_{b,j}} \:
       \PRes[b(j)](\idxup{\diff\psi\ps._{p_{b,j}}} . u_{p_{b,j}})
       }_{\lcS+1[b]'}
    \\
    =&\sum_{b \in \Iset+1} \sigma_+ \iinner{
        \frac{v_{b;(\infty)}}{\sect_{(b)}}
       \:}{\quad\;
       \smash{\smashoperator{\sum_{p\in\Iset \colon \lcS+1[b] \subset
       \lcS}}} \:
       \sgn{b:p} \:
       \PRes[\lcS+1[b] | \lcS](\idxup{\diff\psi\ps.} . u_{p})
       }_{\lcS+1[b]'}
    \\
    =&\sum_{b \in \Iset+1} \sigma_+ \iinner{
        \frac{v_{b;(\infty)}}{\sect_{(b)}}
       \:}{\HRes(u)_b}_{\lcS+1[b]'}
       \overset{\text{\eqref{eq:Cech-Dolbeault-on-w_b}}}=
       \sum_{b \in \Iset+1} (-1)^{q-1} \sigma_+ \iinner{
         w_b -\dbar v_{b;(2)} \:
       }{\HRes(u)_b}_{\lcS+1[b]'}
    \\
    \overset{\mathclap{\alert[Gray]{\HRes(u)\text{ harmonic}}}}=
    &\qquad
      \sum_{b \in \Iset+1} (-1)^{q-1} \sigma_+ \iinner{
         w_b \:
      }{\HRes(u)_b}_{\lcS+1[b]'}
      =(-1)^{q-1} \sigma_+ \iinner{w}{\HRes(u)}_{\lcc+1<X^\circ>'} \; .
  \end{align*}
  Note that, since $u$ is harmonic, one has $\iinner{(-1)^{q-1} \delta
    w}{u}_{\lcc<X^\circ>'} = \iinner{\deltaH w}{u}_{\lcc<X^\circ>'}$.
  The desired equality then follows.
\end{proof}

For the application in next section, for any $\sigma' >
\sigma \geq 0$ and $q \geq 1$, consider the commutative diagram
\begin{equation} \label{eq:reduce-to-residl-comm-diagram}
  \begin{gathered}
    \xymatrix@R=1.2em{
      {\dotsm} \ar[r]
      &{\spH/q-1/{\residlof+1*}}
      \ar[r]^-{\delta}
      \ar[d]^-{\tau^{\sigma'}_{\sigma+1}}
      &{\spH{\residlof*}}
      \ar[r]^-{\tau^{\sigma+1}_{\sigma}}
      \ar@{=}[d]
      &{\spH{\faidlof+1/-1*}} \ar[r] \ar[d]_-{\iota}
      &{\dotsm}
      \\
      {\dotsm} \ar[r]
      &{\spH/q-1/{\faidlof|\sigma'|/*}}
      \ar[r]^-{\delta'}
      &{\spH{\residlof*}}
      \ar[r]^-{\tau^{\sigma'}_{\sigma}}
      &{\spH{\faidlof|\sigma'|/-1*}} \ar[r]
      &{\dotsm}
    }
  \end{gathered}
\end{equation}
where the rows are exact and $\delta$ is the connecting morphism
considered in Proposition
\ref{prop:res-formula-dbar-exact-dot-harmonic}.
The injectivity theorem in \cite{Chan&Choi&Matsumura_injectivity}
implies the following proposition.

\begin{prop} \label{prop:ker-tau-reduction}
  Under the assumption that $\ibddbar\bphi \geq 0$ on $X$, one has
  \begin{equation*}
    \ker\tau^{\sigma'}_{\sigma}
    = \ker\tau^{\sigma+1}_{\sigma} = \im \delta
    \isom \im\deltaH \; .
  \end{equation*}
\end{prop}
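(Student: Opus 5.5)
The plan is to prove the three relations $\ker\tau^{\sigma'}_{\sigma} = \ker\tau^{\sigma+1}_{\sigma}$, $\ker\tau^{\sigma+1}_{\sigma} = \im\delta$ and $\im\delta \isom \im\deltaH$ separately, working in the diagram \eqref{eq:reduce-to-residl-comm-diagram}.

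The middle equality needs nothing new. The upper row of \eqref{eq:reduce-to-residl-comm-diagram} is the long exact sequence induced by the short exact sequence $0\to\faidlof/-1*\to\faidlof+1/-1*\to\faidlof+1/*\to0$, with the outer terms identified with $\residlof*$ and $\residlof+1*$ via $\Res$ and the residue exact sequence \eqref{eq:residue-seq}. Exactness of this row at $\spH{\residlof*}$ gives $\ker\tau^{\sigma+1}_{\sigma}=\im\delta$ directly.

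The last isomorphism is also formal. By construction, $\deltaH w$ is the harmonic representative of $(-1)^{q-1}\delta w$ under the isomorphism $\spH{\residlof*}\isom\spHarm{\residlof*}$. So $\im\deltaH$ is the image of $\im\delta$ under this isomorphism.

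The main content is the first equality. Commutativity of the right square gives $\tau^{\sigma'}_{\sigma}=\iota\circ\tau^{\sigma+1}_{\sigma}$, so $\ker\tau^{\sigma+1}_{\sigma}\subset\ker\tau^{\sigma'}_{\sigma}$ automatically. For the reverse inclusion it suffices to show that $\iota\colon\spH{\faidlof+1/-1*}\to\spH{\faidlof|\sigma'|/-1*}$ is injective. This is an instance of the injectivity theorem of \cite{Chan&Choi&Matsumura_injectivity}. Under $\ibddbar\bphi\ge0$, the map induced on cohomology by the inclusion $\aidlof+1*\hookrightarrow\aidlof|\sigma'|*$, after quotienting by $\aidlof-1*$, is injective. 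I will check that the hypotheses there match the present setting: the snc configuration, the decomposition $\vphi_L+\psi=\bphi+\phi_{S_0}+\phi_S$, and $P_{\bphi}\le B$. If the cited theorem is only stated for $\aidlof-1*\subset\aidlof|\sigma'|*$ without the quotient, I will reduce to it by the five lemma applied to the map of long exact sequences associated with $\faidlof/-1*$.

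If that reduction is not immediate, I will argue injectivity of $\iota$ by induction on $\sigma'-\sigma$. Then $\iota$ factors as a composition of maps $\spH{\faidlof|k|/-1*}\to\spH{\faidlof|k+1|/-1*}$ for $k=\sigma+1,\dots,\sigma'-1$. The kernel of each is the image of a connecting map into $\spH{\faidlof|k|/-1*}$ from $\spH/q-1/{\residlof|k+1|*}$. That image can be shown to vanish using the harmonic residue formula of Proposition~\ref{prop:res-formula-dbar-exact-dot-harmonic}: for the relevant harmonic $u$, pairing with $\deltaH w$ reduces to $\iinner{w}{\HRes(u)}$. I would then need Propositions~\ref{prop:consequence-of-positivity} and~\ref{prop:consequence-of-tBK} to control $\HRes(u)$.

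I expect this injectivity step to be the main obstacle, in particular matching it precisely with the form of the injectivity theorem available in \cite{Chan&Choi&Matsumura_injectivity}.
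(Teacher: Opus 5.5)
Your arguments for $\ker\tau^{\sigma+1}_{\sigma}=\im\delta$ and $\im\delta\cong\im\deltaH$ are fine and match the paper.

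\textbf{The gap.} The gap is in the reverse inclusion $\ker\tau^{\sigma'}_{\sigma}\subset\ker\tau^{\sigma+1}_{\sigma}$. You reduce it to injectivity of $\iota$, but $\iota$ is \emph{not} injective in general when only $\ibddbar\bphi\geq0$ is assumed. So neither the appeal to \cite{Chan&Choi&Matsumura_injectivity} nor your induction over $k$ can succeed.

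Indeed, $\ker\iota$ is the image of the connecting map out of $\spH/q-1/{\aidlof|\sigma'|*/\aidlof+1*}$. Hence injectivity of $\iota$ is equivalent to surjectivity of $\spH/q-1/{\faidlof|\sigma'|/-1*}\to\spH/q-1/{\aidlof|\sigma'|*/\aidlof+1*}$. That is an extension statement of exactly the type that Theorem~\ref{thm:global-extension} obtains only under the stronger \eqref{eq:curvature-assumption}.

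Concretely, take $X=\mathbb{P}^2$ and $S=S_1+S_2$ two lines meeting at $x$. Let $L=S$ with $\vphi_L=\sm\vphi_S$ and $\psi=\psi_S$, so $\bphi$ is constant, $S_0=0$ and $B=0$; all standing assumptions hold. Then $\aidlof|1|*=\mathfrak{m}_x$, $\aidlof|2|*=\mathcal{O}_X$ and $K_X\otimes L=\mathcal{O}(-1)$. For $\sigma=0$, $\sigma'=2$, $q=1$, the map $\iota$ is
\[
H^1(\mathbb{P}^2,\mathcal{O}(-1)\otimes\mathfrak{m}_x)\cong\mathbb{C}\to H^1(\mathbb{P}^2,\mathcal{O}(-1))=0 .
\]
The proposition itself is not contradicted, since $\spH/1/{\residlof|0|*}=H^1(\mathbb{P}^2,K_{\mathbb{P}^2})=0$.

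Your inductive fallback targets the same false statement. In addition, $\spH{\faidlof|k|/-1*}$ for $k>\sigma$ is not the cohomology of a residue sheaf, so there is no harmonic $u$ to which Proposition~\ref{prop:res-formula-dbar-exact-dot-harmonic} applies. Finally, forcing $\HRes(u)=0$ via Propositions~\ref{prop:consequence-of-positivity} and~\ref{prop:consequence-of-tBK} is precisely the step that needs \eqref{eq:curvature-assumption}.

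\textbf{What is needed instead.} All you need is $\ker\iota\cap\im\tau^{\sigma+1}_{\sigma}=0$, and the proof must use that the class comes from a harmonic $u$ on the $\sigma$-lc centres. The paper takes $u\in\ker\tau^{\sigma'}_{\sigma}$ harmonic and orthogonal to $\im\deltaH=\ker\tau^{\sigma+1}_{\sigma}$.

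Proposition~\ref{prop:res-formula-dbar-exact-dot-harmonic} then gives $\sigma_+\iinner{w}{\HRes(u)}=\iinner{\deltaH w}{u}=0$ for all harmonic $w$. Since $\HRes(u)$ is itself such a harmonic form, $\HRes(u)=0$ by orthogonality alone, with no curvature input beyond $\ibddbar\bphi\geq0$.

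Next, $u\in\im\delta'$ lets you represent each $u_p$ by the \v Cech coboundary of $\rs\gamma_p/\sect_{(p)}$, with liftings in $\aidlof|\sigma'|*$ instead of $\aidlof+1*$. Rerunning the integration by parts from the proof of that proposition yields $\norm{u}^2=\sum_{b}\sigma_+\iinner{v_{b;(\infty)}/\sect_{(b)}}{\HRes(u)_b}=0$.
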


\begin{proof}
  The equality $\ker\tau^{\sigma+1}_{\sigma} = \im\delta$ follows from
  the exactness of the first row of
  \eqref{eq:reduce-to-residl-comm-diagram} and $\im\delta \isom
  \im\deltaH$ is a consequence of the validity of the harmonic theory,
  namely, $\spH/\bullet/{\residlof|\star|*} \isom
  \spHarm/\bullet/{\residlof|\star|*}$.
  The remaining equality $\ker\tau^{\sigma'}_{\sigma}
  =\ker\tau^{\sigma+1}_{\sigma}$ follows from
  \cite{Chan&Choi&Matsumura_injectivity}*{the proof of Thm.~3.4.1} and
  \cite{Chan&Choi&Matsumura_injectivity}*{Remark 3.4.2} with suitable adjustments.
  For the sake of clarity, a direct argument using Proposition
  \ref{prop:res-formula-dbar-exact-dot-harmonic} is given here.

  Note that $\ker\tau^{\sigma+1}_{\sigma} \subset
  \ker\tau^{\sigma'}_{\sigma}$ follows from the commutativity of
  \eqref{eq:reduce-to-residl-comm-diagram}.
  It suffices to show the reverse inclusion.
  After identifying the harmonic spaces with the cohomology groups,
  one can pick any $u =\paren{u_p}_{p\in \Iset} \in
  \ker\tau^{\sigma'}_{\sigma}$ such that $u$ is orthogonal to
  $\ker\tau^{\sigma+1}_{\sigma} = \im\deltaH$ in $\spHarm{\residlof*}$.
  The goal is then to show that $u = 0$
  (i.e.~$\ker\tau^{\sigma'}_{\sigma} \cap
  \paren{\ker\tau^{\sigma+1}_{\sigma}}^\perp = 0$), which will
  complete the proof.
  
  Proposition \ref{prop:res-formula-dbar-exact-dot-harmonic}
  guarantees that $\HRes(u) \in \spHarm/q-1/{\residlof+1*}$ and
  \begin{equation*}
    \sigma_+\iinner{w}{\HRes(u)}_{\lcc+1<X^\circ>'} = \iinner{\deltaH w}{u}_{\lcc<X^\circ>'}
    \overset{\alert[Gray]{\mathllap{\im\,}\deltaH \perp u}}= 0
    \quad\text{ for all } w \in \spHarm/q-1/{\residlof+1*} \; .
  \end{equation*}
  This implies that $\HRes(u) = 0$.
  
  On the other hand, $u \in \ker\tau^{\sigma'}_{\sigma} =\im\delta'$
  means that the class of each $u_p = \res u_{\lcS}$ can be represented
  (via \eqref{eq:Cech-Dolbeault-isom}) by a \v Cech coboundary
  $\set{\alpha_{p;\idx 0.q}}_{\idx 0,q \in I} = \set{\frac{\paren{\delta
      \rs*\gamma_p}_{\idx 0.q}}{\sect\ps_p}}_{\idx 0,q\in I}$, and the computation
  in the proof of Proposition
  \ref{prop:res-formula-dbar-exact-dot-harmonic}
  (cf.~\cite{Chan&Choi&Matsumura_injectivity}*{Step II of the proof of
    Thm.~3.4.1} or
  \cite{Chan&Choi&Matsumura_injectivity-II}*{Step 3 of the proof of
    Thm.~5.2.1}) leads to 
  \begin{equation*}
    \norm u_{\lcc<X^\circ>'}^2
    =
    \smashoperator[r]{\sum_{b\in \Iset+1}} \sigma_+
    \iinner{
      \frac{v_{b;(\infty)}}{\sect\ps_b}\:
    }{\HRes(u)_b}_{\lcS+1[b]'}
    \overset{\mathclap{\alert[Gray]{\HRes(u) = 0}}}= \quad 0
    \; ,
  \end{equation*}
  where the notations follow those in the proof of Proposition
  \ref{prop:res-formula-dbar-exact-dot-harmonic}, with the only change that the local
  sections $\gamma_{\idx 1.q}$ take values in $\aidlof|\sigma'|*$
  instead of $\aidlof+1*$.
  The proof is then complete.
\end{proof}




}

\subsection{Existence of extension}
\label{sec:global-extension}



The main theorem is proved in this section.
Recall that, under the snc assumption, there is a decomposition
$\vphi_L+\psi =\bphi +\phi_{S_0} +\phi_S$ such that
$\bphi^{-1}(-\infty)$ contains no lc centres of $(X,S)$.
For every $\sigma = 0, 1, \dots, n$ and $p \in \Iset$ (with
$\Iset|0| := \set 0$ being a singleton and $\lcS|0|[] :=\lcS|0|[0] :=
X$), let $\Psi\ps \leq -1$ be a quasi-psh function with neat analytic
singularities on $\lcS$ (thus $\Psi\ps \not\equiv -\infty$ on $\lcS$)
with its polar set being an snc divisor such that
it decomposes as in \eqref{eq:decompose-psi_S+P} into
\begin{equation*}
  \Psi\ps = \psi\ps. + \psi_{\mu_p \cdot B} \quad\text{ on } \lcS \; ,
\end{equation*}
where $\psi\ps. := \psi_{\nu_p \cdot S\ps}$ and 
\begin{equation*}
  \psi^{-1}\ps.(-\infty) \cap \lcS = \Diff_{p} S
  \quad\text{ and }\quad
  \psi^{-1}_{\mu_p\cdot B}(-\infty) \cap \lcS \subset B \cap \lcS
  \quad\text{ (as sets)}
  \; .
\end{equation*}
%

\begin{thm} \label{thm:global-extension}
  For a given $\sigma = 0, 1, \dots, \sigma_{\mlc}$, suppose that
  there exists a constant $\lambda_0 > 0$ such that
  \begin{equation} \label{eq:curvature-assumption}
    \ibddbar\paren{\bphi +\lambda \Psi\ps} \geq 0
    \quad\text{ on } \lcS \text{ for all } \lambda \in [0,\lambda_0]
    \text{ and for each } p \in \Iset \; .
  \end{equation}
  Then, the short exact sequence
  $\xymatrix@1{
    *+<1ex>{0} \ar[r]
    &*+<1ex>{\residlof*} \ar[r]
    &*+<1ex>{\faidlof|\sigma'|/-1*} \ar[r]
    &*+<1ex>{\faidlof|\sigma'|*} \ar[r]
    &*+<1ex>{0}
  }$ for any $\sigma' \geq \sigma$ induces a long exact sequence which
  splits into short exact sequences 
  \begin{equation*}
    \xymatrix{
      {0} \ar[r]
      &{\spH{\residlof*}} \ar[r]^-{\tau_\sigma^{\sigma'}}
      &{\spH{\faidlof|\sigma'|/-1*}} \ar[r]
      &{\spH{\faidlof|\sigma'|/*}} \ar[r]
      &{0}
    } \quad\text{ for all } q \geq 0 \; .
  \end{equation*}
  In particular, for every $\sigma' > \sigma$, each $f \in
  \spH/0/{\faidlof|\sigma'|*}$ has an ``extension'' $F_{\sigma -1} \in
  \spH/0/{\faidlof|\sigma'|/-1*}$ such that $f \equiv F_{\sigma -1} \mod
  \aidlof*$.

  If \eqref{eq:curvature-assumption} holds true for every integer $\sigma \geq
  0$, it follows by induction that, for every $\sigma' \geq 0$, every $f \in
  \spH/0/{\faidlof|\sigma'|/|\sigma'-1|*}
  \isom \spH/0/{\residlof|\sigma'|*}$ (a section on $\lcc|\sigma'|'$)
  has an extension $F_{-1} \in \spH/0/{\faidlof|\sigma'|/|-1|*}
  =\spH/0/{\aidlof|\sigma'|*}$ (a section on $X$) such that $f \equiv
  F_{-1} \mod \aidlof|\sigma' -1|*$.
\end{thm}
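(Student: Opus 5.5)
The plan is to show that every connecting morphism $\delta'$ in the bottom row of \eqref{eq:reduce-to-residl-comm-diagram} vanishes. Equivalently, $\tau^{\sigma'}_{\sigma}$ should be injective for every $q \geq 0$, so that the long exact sequence splits into the stated short exact sequences.

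By Proposition \ref{prop:ker-tau-reduction} (valid since $\ibddbar\bphi \geq 0$), $\ker\tau^{\sigma'}_{\sigma} = \im\delta \isom \im\deltaH$. It therefore suffices to show $\deltaH = 0$. By the adjunction formula of Proposition \ref{prop:res-formula-dbar-exact-dot-harmonic},
\begin{equation*}
  \iinner{\deltaH w}{u}_{\lcc<X^\circ>'} = \sigma_+ \iinner{w}{\HRes(u)}_{\lcc+1<X^\circ>'} ,
\end{equation*}
so it is enough to prove $\HRes(u) = 0$ for every harmonic $u = (u_p)_{p \in \Iset} \in \spHarm{\residlof*}$. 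For $q = 0$ there is nothing to prove, since $\tau$ is the first map of the sequence.

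The key step is to show that
\begin{equation*}
  \PRes[\lcS+1[b] | \lcS](\idxup{\diff\psi\ps.}. u_p) = 0
\end{equation*}
for each $p$ and each $(\sigma+1)$-lc centre $\lcS+1[b] \subset \lcS$. This uses the curvature assumption \eqref{eq:curvature-assumption}, applied to $(\lcS, \Diff_p S)$ in place of $(X,S)$.

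First, apply Proposition \ref{prop:consequence-of-positivity} on $\lcS$. This gives $\idxup{\ibddbar\bphi}\ptinner{u_p}{u_p} = 0$ pointwise on $\lcS'$.

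Second, use \eqref{eq:curvature-assumption} with some $\lambda \in (0,\lambda_0]$. Off the polar set of $\Psi\ps$ this reads $\ibddbar\bphi - \lambda\,\ibddbar\sm\vphi_{\Psi\ps} \geq 0$, where $\sm\vphi_{\Psi\ps} := \sm\vphi_{\nu_p\cdot S\ps} + \sm\vphi_{\mu_p\cdot B}$. Contracting with $u_p$ and using the vanishing just obtained gives
\begin{equation*}
  \idxup{\ibddbar\sm\vphi_{\Psi\ps}}\ptinner{u_p}{u_p} \leq 0 \quad\text{on } \lcS' \text{ (a.e.)}.
\end{equation*}

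Third, apply Proposition \ref{prop:consequence-of-tBK} on $\lcS$ separately with $\psi_\bullet = \psi\ps.$ and $\psi_\bullet = \psi_{\mu_p\cdot B}$, and add the two identities. The sum of the right-hand sides is $\int \idxup{\ibddbar\sm\vphi_{\Psi\ps}}\ptinner{u_p}{u_p} \leq 0$. Each left-hand side is a limit of nonnegative quantities. Hence both limits vanish, and in particular
\begin{equation*}
  \lim_{\eps \to 0^+} \eps \int_{\lcS'} \frac{\abs{\idxup{\diff\psi\ps.}.u_p}^2}{\abs{\psi\ps.}^{1+\eps}} = 0 .
\end{equation*}
Splitting into the two separate twisted Bochner--Kodaira identities avoids having to control the cross terms that a single twist by $\abs{\Psi\ps}$ would produce.

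Fourth, evaluate this limit by the residue formula of Proposition \ref{prop:residue-formula-classical-kernel}, applied on $\lcS$ with $f = \xi = \idxup{\diff\psi\ps.}.u_p\,\sect\ps$. This is legitimate because $\idxup{\diff\psi\ps.}.u_p\,\sect\ps$ has coefficients in $\smooth[\rs\omega^{\pm}]$ by Proposition \ref{prop:singularities-along-FM}, and Lemma \ref{lem:su-harmonicity} supplies the harmonicity needed. The limit then equals a positive combination of $\norm{\PRes[\lcS+1[b]|\lcS](\idxup{\diff\psi\ps.}.u_p)}^2$ over the $(\sigma+1)$-lc centres in $\lcS$. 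Each of these therefore vanishes, so $\HRes(u)_b = 0$ for every $b \in \Iset+1$.

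I expect the main obstacle to be the justification in the fourth step. One must verify that $\idxup{\diff\psi\ps.}.u_p\,\sect\ps$ genuinely lies in $\smooth[\rs\omega^{\pm}] * \aidlof|1|$ on $\lcS$, and that the singularities along $B$ (including the $\psi_{\mu_p\cdot B}$ part of $\Psi\ps$) do not contribute to the residue. I would handle this with Fubini, as in the remarks after Proposition \ref{prop:singularities-along-FM}. A secondary point is the case $\sigma = 0$, with $\lcS|0|[0] = X$ and $\Diff_0 S = S$, which should go through verbatim.

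For the consequences, set $q = 0$. The surjectivity $\spH/0/{\faidlof|\sigma'|/-1*} \to \spH/0/{\faidlof|\sigma'|*}$ gives the extension $F_{\sigma-1}$ with $f \equiv F_{\sigma-1} \mod \aidlof*$. If \eqref{eq:curvature-assumption} holds for every $\sigma$, apply this successively with $\sigma = \sigma'-1, \sigma'-2, \dots, 0$, starting from $f \in \spH/0/{\residlof|\sigma'|*}$ and keeping $\sigma'$ fixed. Each step lifts a section of $\faidlof|\sigma'|/|\sigma|*$ to one of $\faidlof|\sigma'|/|\sigma-1|*$, and the last step produces $F_{-1} \in \spH/0/{\aidlof|\sigma'|*}$. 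At every step the lift agrees with the previous section modulo $\aidlof*$, and $\aidlof* \subset \aidlof|\sigma'-1|*$ for $\sigma \leq \sigma'-1$. Hence $f \equiv F_{-1} \mod \aidlof|\sigma'-1|*$.
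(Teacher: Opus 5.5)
Your proposal is correct and follows the paper's proof. The paper also uses harmonicity with Proposition \ref{prop:consequence-of-positivity}, then the two separate twisted Bochner--Kodaira identities of Proposition \ref{prop:consequence-of-tBK} played off against \eqref{eq:curvature-assumption}, then the residue formula to get $\HRes(u)=0$, and finally Propositions \ref{prop:ker-tau-reduction} and \ref{prop:res-formula-dbar-exact-dot-harmonic} to kill $\ker\tau^{\sigma'}_\sigma$. The only difference is cosmetic: you deduce $\deltaH = 0$ from $\HRes \equiv 0$ by adjointness, whereas the paper writes $u = \deltaH w$ for $u \in \ker\tau^{\sigma'}_\sigma$ and gets $\|u\|^2 = \sigma_+\langle\!\langle w, \HRes(u)\rangle\!\rangle = 0$; these are equivalent, since the paper's derivation of $\HRes(u)=0$ never uses $u \in \ker\tau^{\sigma'}_\sigma$.
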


\begin{proof}
  \setDefaultMetric{\rs\omega}

  It suffices to show that $\ker\tau^{\sigma'}_\sigma = 0$ for every
  $\sigma' \geq \sigma$ (and for every $q \geq 0$).
  Note that the equality
  \begin{equation*}
    K_X \otimes L \otimes \residlof
    = \bigoplus_{p\in\Iset} K_{\lcS} \otimes \underbrace{
      \parres{L \otimes S_0^{-1}
      \otimes S^{-1}}_{\lcS} 
      }_{=: \: \res{L'}_{\lcS}} \otimes
      \mtidlof<\lcS>{\bphi} \; ,
  \end{equation*}
  where $\res{L'}_{\lcS}$ is a line bundle on $\lcS$ equipped with the
  potential $\res{\bphi}_{\lcS}$, holds true even for $\sigma = 0$,
  as, by convention, $\residlof|0| =S_0^{-1} \otimes S^{-1}
  \otimes \mtidlof<X>{\bphi} \isom
  \mtidlof<X>{\bphi+\phi_{S_0}+\phi_S} =\aidlof|0|<X>
  =\faidlof|0|/|-1|*$.
  It can be seen below that the proof for $\ker\tau^{\sigma'}_\sigma =
  0$ is formally the same as the proof for $\ker\tau^{\sigma'}_0 =
  0$.
  In what follows, the proof for the case $\sigma = 0$ is given first
  for the benefits of readers.
  The proof for the general statement is given afterwards.

  Let $\rs\omega$ be a \emph{complete} metric on $X^\circ := X
  \setminus B$.
  Write $\lcS' := \lcS \cap X^\circ$ for any $\sigma \geq 0$ and any
  $p \in \Iset$.
  Take any \emph{harmonic form} $u \in \Harm{K_X \otimes
    L'},{\bphi} \isom \spH{\residlof|0|*}$ on $X^\circ$ such that
  $\tau^{\sigma'}_0(u) = 0$.
  The positivity assumption $\ibddbar\bphi \geq 0$, together with
  Proposition \ref{prop:consequence-of-positivity}, implies that
  \begin{equation*}
    \nabla^{(0,1)}u = 0 \quad\text{ and }\quad
    \idxup{\ibddbar\bphi}\ptinner u u_{\bphi,\rs\omega} = 0
    \quad\text{ on } X^\circ \; ,
  \end{equation*}
  which, according to Proposition \ref{prop:consequence-of-tBK}, yields
  \begin{align*}
    \int_{X^\circ} \idxup{\ibddbar\sm\vphi\ps._0}\ptinner u u_{\bphi,\rs\omega}
    &=\lim_{\eps \tendsto 0^+} \eps \int_{\mathrlap{X^\circ}} \;\; \frac{
      \abs{\idxup{\diff\psi\ps._0}* . u}_{\bphi,\rs\omega}^2
    }{\abs{\psi\ps._0}^{1+\eps}} \geq 0 \; ,
    \\
    \int_{X^\circ} \idxup{\ibddbar\sm\vphi_{\mu_0\cdot B}} \ptinner u u_{\bphi,\rs\omega}
    &=\lim_{\eps \tendsto 0^+} \eps \int_{\mathrlap{X^\circ}} \;\; \frac{
      \abs{\idxup{\diff\psi_{\mu_0\cdot B}}* . u}_{\bphi,\rs\omega}^2
    }{\abs{\psi_{\mu_0\cdot B}}^{1+\eps}} \geq 0 \; .
  \end{align*}
  Note that $\ibddbar\Psi\ps_0 = \ibddbar\psi\ps._0
  +\ibddbar\psi_{\mu_0\cdot B} = -\ibddbar\sm\vphi\ps._0
  -\ibddbar\sm\vphi_{\mu_0\cdot B}$ on $X^\circ \setminus S$.
  The assumption \eqref{eq:curvature-assumption}${}_0$ (the assumption
  with $\sigma = 0$) together with $\idxup{\ibddbar\bphi}\ptinner u
  u_{\bphi,\rs\omega} = 0$ implies that the sum of the left-hand sides
  of both equations above is
  \emph{non-positive}.
  That in turn implies, in particular, that
  \begin{equation*}
    0 = \lim_{\eps \tendsto 0^+} \eps \int_{\mathrlap{X^\circ}} \;\; \frac{
      \abs{\idxup{\diff\psi\ps._0}* . u}_{\bphi,\rs\omega}^2
    }{\abs{\psi\ps._0}^{1+\eps}}
    \;\overset{\text{Prop.~\ref{prop:residue-formula-classical-kernel}}}=\;
    \sum_{b \in \Iset|1|} \frac{\pi}{\nu_{b:0}} \int_{\lcS|1|[b]'}
    \abs{\PRes[\lcS|1|[b]](\idxup{\diff\psi\ps._0}
      . u)}_{\bphi,\rs\omega}^2 \; ,
  \end{equation*}
  and therefore $\HRes(u) = \paren{
    \PRes[\lcS|1|[b]](\idxup{\diff\psi\ps._0} . u)}_{b \in \Iset|1|} =
  0$. 
  Proposition \ref{prop:ker-tau-reduction} implies that $u \in
  \ker\tau^{\sigma'}_0 = \im\deltaH$, thus $u = \deltaH w$ for
  some $w \in \spHarm/q-1/{\residlof|1|*}$.
  Proposition \ref{prop:res-formula-dbar-exact-dot-harmonic} then yields
  \begin{equation*}
    \norm u_{X^\circ}^2
    =\iinner{\deltaH w}{u}_{X^\circ}
    =\iinner{w}{\HRes(u)}_{\lcc|1|<X^\circ>'}
    =0 \; .
  \end{equation*}
  Therefore, $u = 0$ and, consequently, $\ker\tau^{\sigma'}_0 = 0$.

  For the general case, take $u = \paren{u_p}_{p \in \Iset} \in
  \bigoplus_{p\in\Iset} \Harm<\lcS>{\logKX<\lcS>[L']},{\bphi} \isom
  \spH{\residlof*}$ such that $\tau^{\sigma'}_\sigma (u) = 0$.
  The same argument yields the equations $\nabla^{(0,1)}u_p = 0$ and
  $\idxup{\ibddbar\bphi} \ptinner{u_p}{u_p}_{\bphi,\rs\omega} = 0$ on
  $\lcS'$, and therefore, by Proposition \ref{prop:consequence-of-tBK},
  \begin{align*}
    \int_{\lcS'} \idxup{\ibddbar\sm\vphi\ps.}\ptinner{u_p}{u_p}_{\bphi,\rs\omega}
    &=\lim_{\eps \tendsto 0^+} \eps \int_{\mathrlap{\lcS'}} \;\; \frac{
      \abs{\idxup{\diff\psi\ps.}* . u_p}_{\bphi,\rs\omega}^2
    }{\abs{\psi\ps.}^{1+\eps}} \geq 0 \; ,
    \\
    \int_{\lcS'} \idxup{\ibddbar\sm\vphi_{\mu_p\cdot B}}\ptinner{u_p}{u_p}_{\bphi,\rs\omega}
    &=\lim_{\eps \tendsto 0^+} \eps \int_{\mathrlap{\lcS'}} \;\; \frac{
      \abs{\idxup{\diff\psi_{\mu_p\cdot B}}* . u_p}_{\bphi,\rs\omega}^2
    }{\abs{\psi_{\mu_p\cdot B}}^{1+\eps}} \geq 0 \; .
  \end{align*}
  The assumption \eqref{eq:curvature-assumption}${}_{\sigma}$
  then implies that 
  \begin{equation*}
    0 = \lim_{\eps \tendsto 0^+} \eps \int_{\mathrlap{\lcS'}} \;\; \frac{
      \abs{\idxup{\diff\psi\ps.}* . u_p}_{\bphi,\rs\omega}^2
    }{\abs{\psi\ps.}^{1+\eps}}
    \;\overset{\text{Prop.~\ref{prop:residue-formula-classical-kernel}}}=\;
    \smashoperator{\sum_{b \in \Iset+1 \colon \lcS+1[b] \subset \lcS}}
    \quad\;\,
    \frac{\pi}{\nu_{b:p}} \int_{\lcS+1[b]'}
    \abs{\PRes[\lcS+1[b] | \lcS](\idxup{\diff\psi\ps.}
      . u_p)}_{\bphi,\rs\omega}^2 \; ,
  \end{equation*}
  and therefore $\PRes[\lcS+1[b] | \lcS](\idxup{\diff\psi\ps.}
  . u_p) = 0$ for every $b \in \Iset+1$ and $p \in\Iset$ such that
  $\lcS+1[b] \subset \lcS$.
  Proposition \ref{prop:ker-tau-reduction} says that $u \in \ker\tau^{\sigma'}_\sigma = \im\deltaH$.
  Proposition \ref{prop:res-formula-dbar-exact-dot-harmonic} then gives the equation 
  \begin{equation*}
    \norm u_{\lcc<X^\circ>'}^2
    =\iinner{\deltaH w}{u}_{\lcc<X^\circ>'}
    =\sigma_+ \iinner{w}{\HRes(u)}_{\lcc+1<X^\circ>'}
    \; ,
  \end{equation*}
  where $u = \deltaH w$ for some $w \in \spHarm/q-1/{\residlof+1*}$ and
  $\HRes(u) = \paren{\HRes(u)_b}_{b\in \Iset+1}$ with
  \begin{equation*}
    \HRes(u)_b
    =\quad\;\;
    \smashoperator{\sum_{p \in \Iset \colon \lcS+1[b] \subset \lcS}}
    \: \sgn{b:p} \:
    \PRes[\lcS+1[b] | \lcS](\idxup{\diff\psi\ps.} . u_p) =0 \; .
  \end{equation*}
  This again implies that $u = 0$ and thus $\ker\tau^{\sigma'}_\sigma
  = 0$, which completes the proof.
\end{proof}

\begin{remark} \label{rem:curv-assumption-on-X_only}
  It can be seen from the proof that the coefficients $\nu_p$ and
  $\mu_p$ in $\Psi\ps$ need not be related to the coefficients in
  $\psi$.
  In practice, it may be useful (for example, for handling the dlt conjecture
  \cite{DHP}*{Conj.~1.3}) if one can construct $\Psi\ps$ satisfying
  \eqref{eq:curvature-assumption}${}_{\sigma}$ on $\lcS$ for $\sigma
  \geq 1$ from $\psi$ when $\Psi\ps_0 := \psi$ satisfies
  \eqref{eq:curvature-assumption}${}_0$ on $X$.
  At the time of writing, it is unknown to the author whether there is
  a general procedure such that this can be achieved.
\end{remark}

\begin{remark} \label{rem:weaker-assumptions}
  Note also that the positivity assumption
  \eqref{eq:curvature-assumption} is used to show that $\HRes(u) = 0$
  by proving the vanishing of each of its summands $\PRes[\lcS+1[b] |
  \lcS](\dots)$.
  It would be interesting to see if the assumption
  \eqref{eq:curvature-assumption} can be weakened so that $\HRes(u)$
  vanishes without resorting to the vanishing of all summands.
  Indeed, in the proof of the injectivity theorems in
  \cites{Chan&Choi&Matsumura_injectivity,
    Chan&Choi&Matsumura_injectivity-II}, this is the case under a weaker
  positivity assumption (only $\ibddbar\bphi \geq 0$), but the section
  $u$ there has to satisfy an assumption which is irrelevant
  to the extension problem.
\end{remark}






\begin{bibdiv}
  \begin{biblist}
    \IfFileExists{references.ltb}{
      \bibselect{references}
    }{
      \bib{Berndtsson_OT-extension}{article}{
  author={Berndtsson, Bo},
  title={The extension theorem of Ohsawa-Takegoshi and the theorem of Donnelly-Fefferman},
  language={English, with English and French summaries},
  journal={Ann. Inst. Fourier (Grenoble)},
  volume={46},
  date={1996},
  number={4},
  pages={1083--1094},
  issn={0373-0956},
  review={\MR {1415958}},
}

\bib{Berndtsson_ext-form}{article}{
  author={Berndtsson, Bo},
  title={$L^2$-extension of $\overline {\partial }$-closed form},
  journal={Illinois J. Math.},
  volume={56},
  date={2012},
  number={1},
  pages={21--31 (2013)},
  issn={0019-2082},
  review={\MR {3117015}},
}

\bib{Berndtsson&Lempert}{article}{
  author={Berndtsson, Bo},
  author={Lempert, L\'{a}szl\'{o}},
  title={A proof of the Ohsawa-Takegoshi theorem with sharp estimates},
  journal={J. Math. Soc. Japan},
  volume={68},
  date={2016},
  number={4},
  pages={1461--1472},
  issn={0025-5645},
  review={\MR {3564439}},
  doi={10.2969/jmsj/06841461},
}

\bib{Blocki_Suita-conj}{article}{
  author={B\l ocki, Zbigniew},
  title={Suita conjecture and the Ohsawa-Takegoshi extension theorem},
  journal={Invent. Math.},
  volume={193},
  date={2013},
  number={1},
  pages={149--158},
  issn={0020-9910},
  review={\MR {3069114}},
  doi={10.1007/s00222-012-0423-2},
}

\bib{Cao&Demailly&Matsumura}{article}{
  author={Cao, JunYan},
  author={Demailly, Jean-Pierre},
  author={Matsumura, Shinichi},
  title={A general extension theorem for cohomology classes on non reduced analytic subspaces},
  journal={Sci. China Math.},
  volume={60},
  date={2017},
  number={6},
  pages={949--962},
  issn={1674-7283},
  review={\MR {3647124}},
  doi={10.1007/s11425-017-9066-0},
}

\bib{Cao&Paun_OT-ext}{article}{
  author={Cao, Junyan},
  author={P\u aun, Mihai},
  author={Berndtsson, Bo},
  title={On the Ohsawa--Takegoshi extension theorem},
  journal={J. Geom. Anal.},
  volume={34},
  date={2024},
  number={1},
  pages={Paper No. 25, 47},
  issn={1050-6926},
  review={\MR {4668049}},
  doi={10.1007/s12220-023-01466-9},
  arxiv={2002.04968 [math.CV]},
}

\bib{Chan_on-L2-ext-with-lc-measures}{article}{
  author={Chan, Tsz On Mario},
  title={On an $L^2$ extension theorem from log-canonical centres with log-canonical measures},
  journal={Math. Z.},
  volume={301},
  date={2022},
  number={2},
  pages={1695--1717},
  issn={0025-5874},
  review={\MR {4418335}},
  doi={10.1007/s00209-021-02890-9},
  eprint={https://rdcu.be/cFDPA},
  arxiv={2008.03019 [math.CV]},
  note={Numbering of cited sections and theorems follows the arXiv version},
}

\bib{Chan_adjoint-ideal-nas}{article}{
  author={Chan, Tsz On Mario},
  title={A new definition of analytic adjoint ideal sheaves via the residue functions of log-canonical measures I},
  journal={J. Geom. Anal.},
  volume={33},
  date={2023},
  number={9},
  pages={Paper No. 279, 68},
  issn={1050-6926},
  review={\MR {4605571}},
  doi={10.1007/s12220-023-01314-w},
  eprint={https://rdcu.be/deUDt},
  arxiv={2111.05006 [math.CV]},
}

\bib{Chan_Residue-fct-proceedings}{article}{
  author={Chan, Tsz On Mario},
  title={Residue functions and extension problems},
  arxiv={2211.00885 [math.CV]},
  date={2022},
  note={To appear in Proceedings of CCGA2022 and KSCV14},
}

\bib{Chan&Choi_ext-with-lcv-codim-1}{article}{
  author={Chan, Tsz On Mario},
  author={Choi, Young-Jun},
  title={Extension with log-canonical measures and an improvement to the plt extension of Demailly-Hacon-P\u {a}un},
  journal={Math. Ann.},
  volume={383},
  date={2022},
  number={3-4},
  pages={943--997},
  issn={0025-5831},
  review={\MR {4458394}},
  doi={10.1007/s00208-021-02152-3},
  eprint={https://rdcu.be/cn5N6},
  arxiv={1912.08076 [math.CV]},
}

\bib{Chan&Choi_injectivity-I}{article}{
  author={Chan, Tsz On Mario},
  author={Choi, Young-Jun},
  title={On an injectivity theorem for log-canonical pairs with analytic adjoint ideal sheaves},
  journal={Trans. Amer. Math. Soc.},
  volume={376},
  number={12},
  pages={8337--8381},
  issn={0002-9947},
  review={\MR {4669299}},
  doi={10.1090/tran/8935},
  arxiv={2205.06954 [math.CV]},
  date={2023},
}

\bib{Chan&Choi_injectivity-proceedings}{article}{
  author={Chan, Tsz On Mario},
  author={Choi, Young-Jun},
  title={An application of adjoint ideal sheaves to injectivity and extension theorems},
  conference={ title={Convex and complex: perspectives on positivity in geometry}, },
  book={ series={Contemp. Math.}, volume={810}, publisher={Amer. Math. Soc., Providence, RI}, },
  isbn={978-1-4704-7338-9},
  isbn={[9781470478612]},
  date={[2025] \copyright 2025},
  pages={83--97},
  review={\MR {4853194}},
  arxiv={2306.00670 [math.CV]},
  doi={10.1090/conm/810/16208},
}

\bib{Chan&Choi&Matsumura_injectivity}{article}{
  author={Chan, Tsz On Mario},
  author={Choi, Young-Jun},
  author={Matsumura, Shinichi},
  title={An injectivity theorem on snc compact K\"ahler spaces: an application of the theory of harmonic integrals on log-canonical centers via adjoint ideal sheaves},
  arxiv={2307.12025 [math.CV]},
  date={2023},
}

\bib{Chan&Choi&Matsumura_injectivity-II}{article}{
  author={Chan, Tsz On Mario},
  author={Choi, Young-Jun},
  author={Matsumura, Shin-ichi},
  title={Injectivity theorems for higher direct images under proper K\"ahler morphisms on snc spaces},
  arxiv={arXiv:2409.14100 [math.CV]},
  date={2024},
}

\bib{Demailly_on_OTM-extension}{article}{
  author={Demailly, Jean-Pierre},
  title={On the Ohsawa-Takegoshi-Manivel $L^2$ extension theorem},
  language={English, with English and French summaries},
  conference={ title={Complex analysis and geometry}, address={Paris}, date={1997}, },
  book={ series={Progr. Math.}, volume={188}, publisher={Birkh\"{a}user, Basel}, },
  date={2000},
  pages={47--82},
  review={\MR {1782659}},
}

\bib{Demailly_multiplier-ideal-sheaves}{article}{
  author={Demailly, Jean-Pierre},
  title={Multiplier ideal sheaves and analytic methods in algebraic geometry},
  conference={ title={School on Vanishing Theorems and Effective Results in Algebraic Geometry}, address={Trieste}, date={2000}, },
  book={ series={ICTP Lect. Notes}, volume={6}, publisher={Abdus Salam Int. Cent. Theoret. Phys., Trieste}, },
  date={2001},
  pages={1--148},
  review={\MR {1919457}},
}

\bib{Demailly}{webpage}{
  author={Demailly, Jean-Pierre},
  title={Complex analytic and differential geometry},
  note={OpenContent Book},
  url={https://www-fourier.ujf-grenoble.fr/~demailly/manuscripts/agbook.pdf},
  date={2012},
}

\bib{Demailly_extension}{article}{
  author={Demailly, Jean-Pierre},
  title={Extension of holomorphic functions defined on non reduced analytic subvarieties},
  conference={ title={The legacy of Bernhard Riemann after one hundred and fifty years. Vol. I}, },
  book={ series={Adv. Lect. Math. (ALM)}, volume={35}, publisher={Int. Press, Somerville, MA}, },
  date={2016},
  pages={191--222},
  review={\MR {3525916}},
  arxiv={1510.05230 [math.CV]},
}

\bib{DHP}{article}{
  author={Demailly, Jean-Pierre},
  author={Hacon, Christopher D.},
  author={P\u {a}un, Mihai},
  title={Extension theorems, non-vanishing and the existence of good minimal models},
  journal={Acta Math.},
  volume={210},
  date={2013},
  number={2},
  pages={203--259},
  issn={0001-5962},
  review={\MR {3070567}},
  doi={10.1007/s11511-013-0094-x},
}

\bib{Fujino_injectivity-II}{article}{
  author={Fujino, Osamu},
  title={A transcendental approach to Koll\'{a}r's injectivity theorem II},
  journal={J. Reine Angew. Math.},
  volume={681},
  date={2013},
  pages={149--174},
  issn={0075-4102},
  review={\MR {3181493}},
  doi={10.1515/crelle-2012-0036},
}

\bib{Guan&Zhou_optimal-L2-estimate}{article}{
  author={Guan, Qi'an},
  author={Zhou, Xiangyu},
  title={A solution of an $L^2$ extension problem with an optimal estimate and applications},
  journal={Ann.~of Math.~(2)},
  volume={181},
  date={2015},
  number={3},
  pages={1139--1208},
  issn={0003-486X},
  review={\MR {3296822}},
  doi={10.4007/annals.2015.181.3.6},
}

\bib{Guan&Zhou_openness}{article}{
  author={Guan, Qi'an},
  author={Zhou, Xiangyu},
  title={A proof of Demailly's strong openness conjecture},
  journal={Ann. of Math. (2)},
  volume={182},
  date={2015},
  number={2},
  pages={605--616},
  issn={0003-486X},
  review={\MR {3418526}},
  doi={10.4007/annals.2015.182.2.5},
}

\bib{Guan&Zhou_effective_openness}{article}{
  author={Guan, Qi'an},
  author={Zhou, Xiangyu},
  title={Effectiveness of Demailly's strong openness conjecture and related problems},
  journal={Invent. Math.},
  volume={202},
  date={2015},
  number={2},
  pages={635--676},
  issn={0020-9910},
  review={\MR {3418242}},
  doi={10.1007/s00222-014-0575-3},
}

\bib{Guan&Zhou&Zhu_OTExt}{article}{
  author={Guan, Qi'an},
  author={Zhou, Xiangyu},
  author={Zhu, Langfeng},
  title={On the Ohsawa--Takegoshi $L^2$ extension theorem and the Bochner--Kodaira identity with non-smooth twist factor},
  journal={J.~Math.~Pure.~Appl.},
  volume={97},
  number={6},
  pages={579--601},
  date={2012},
}

\bib{Hiep_openness}{article}{
  author={Hiep, Pham Hoang},
  title={The weighted log canonical threshold},
  language={English, with English and French summaries},
  journal={C. R. Math. Acad. Sci. Paris},
  volume={352},
  date={2014},
  number={4},
  pages={283--288},
  issn={1631-073X},
  review={\MR {3186914}},
  doi={10.1016/j.crma.2014.02.010},
}

\bib{KimDano_lc-extension}{article}{
  author={Kim, Dano},
  title={$L^2$ extension of adjoint line bundle sections},
  language={English, with English and French summaries},
  journal={Ann. Inst. Fourier (Grenoble)},
  volume={60},
  date={2010},
  number={4},
  pages={1435--1477},
  issn={0373-0956},
  review={\MR {2722247}},
}

\bib{KimDano-L2-ext-for-lc}{article}{
  author={Kim, Dano},
  title={$L^{2}$ extension of holomorphic functions for log canonical pairs},
  language={English, with English and French summaries},
  journal={J. Math. Pures Appl. (9)},
  volume={177},
  date={2023},
  pages={198--213},
  issn={0021-7824},
  review={\MR {4629755}},
  doi={10.1016/j.matpur.2023.06.013},
  arxiv={2108.11934 [math.CV]},
}

\bib{Kollar_Sing-of-MMP}{book}{
  author={Koll\'{a}r, J\'{a}nos},
  title={Singularities of the minimal model program},
  series={Cambridge Tracts in Mathematics},
  volume={200},
  note={With a collaboration of S\'{a}ndor Kov\'{a}cs},
  publisher={Cambridge University Press, Cambridge},
  date={2013},
  pages={x+370},
  isbn={978-1-107-03534-8},
  review={\MR {3057950}},
  doi={10.1017/CBO9781139547895},
}

\bib{Lazarsfeld_book-II}{book}{
  author={Lazarsfeld, Robert},
  title={Positivity in algebraic geometry. II},
  series={Ergebnisse der Mathematik und ihrer Grenzgebiete. 3. Folge. A Series of Modern Surveys in Mathematics [Results in Mathematics and Related Areas. 3rd Series. A Series of Modern Surveys in Mathematics]},
  volume={49},
  note={Positivity for vector bundles, and multiplier ideals},
  publisher={Springer-Verlag, Berlin},
  date={2004},
  pages={xviii+385},
  isbn={3-540-22534-X},
  review={\MR {2095472}},
  doi={10.1007/978-3-642-18808-4},
}

\bib{Manivel}{article}{
  author={Manivel, Laurent},
  title={Un th\'{e}or\`eme de prolongement $L^2$ de sections holomorphes d'un fibr\'{e} hermitien},
  language={French},
  journal={Math. Z.},
  volume={212},
  date={1993},
  number={1},
  pages={107--122},
  issn={0025-5874},
  review={\MR {1200166}},
  doi={10.1007/BF02571643},
}

\bib{Matsumura_injectivity}{article}{
  author={Matsumura, Shinichi},
  title={An injectivity theorem with multiplier ideal sheaves of singular metrics with transcendental singularities},
  journal={J. Algebraic Geom.},
  volume={27},
  date={2018},
  number={2},
  pages={305--337},
  issn={1056-3911},
  review={\MR {3764278}},
  doi={10.1090/jag/687},
  arxiv={1308.2033 [math.CV]},
}

\bib{Matsumura_injectivity-lc}{article}{
  author={Matsumura, Shinichi},
  title={A transcendental approach to injectivity theorem for log canonical pairs},
  journal={Ann. Sc. Norm. Super. Pisa Cl. Sci. (5)},
  volume={19},
  date={2019},
  number={1},
  pages={311--334},
  issn={0391-173X},
  review={\MR {3923849}},
}

\bib{Matsumura_injectivity-Kaehler}{article}{
  author={Matsumura, Shinichi},
  title={Injectivity theorems with multiplier ideal sheaves for higher direct images under K\"{a}hler morphisms},
  journal={Algebr. Geom.},
  volume={9},
  date={2022},
  number={2},
  pages={122--158},
  issn={2313-1691},
  review={\MR {4429015}},
  doi={10.14231/ag-2022-005},
  arxiv={1607.05554v2 [math.CV]},
}

\bib{McNeal&Varolin_adjunction}{article}{
  author={McNeal, Jeffery D.},
  author={Varolin, Dror},
  title={Analytic inversion of adjunction: $L^2$ extension theorems with gain},
  language={English, with English and French summaries},
  journal={Ann. Inst. Fourier (Grenoble)},
  volume={57},
  date={2007},
  number={3},
  pages={703--718},
  issn={0373-0956},
  review={\MR {2336826}},
}

\bib{McNeal&Varolin_L2-estimates}{article}{
  author={McNeal, Jeffery D.},
  author={Varolin, Dror},
  title={$L^2$ estimates for the $\overline \partial $ operator},
  journal={Bull. Math. Sci.},
  volume={5},
  date={2015},
  number={2},
  pages={179--249},
  issn={1664-3607},
  review={\MR {3354033}},
  doi={10.1007/s13373-015-0068-8},
}

\bib{Ohsawa-II}{article}{
  author={Ohsawa, Takeo},
  title={On the extension of $L^2$ holomorphic functions. II},
  journal={Publ. Res. Inst. Math. Sci.},
  volume={24},
  date={1988},
  number={2},
  pages={265--275},
  issn={0034-5318},
  review={\MR {944862}},
  doi={10.2977/prims/1195175200},
}

\bib{Ohsawa-III}{article}{
  author={Ohsawa, Takeo},
  title={On the extension of $L^2$ holomorphic functions. III. Negligible weights},
  journal={Math. Z.},
  volume={219},
  date={1995},
  number={2},
  pages={215--225},
  issn={0025-5874},
  review={\MR {1337216}},
  doi={10.1007/BF02572360},
}

\bib{Ohsawa&Takegoshi-I}{article}{
  author={Ohsawa, Takeo},
  author={Takegoshi, Kensh\={o}},
  title={On the extension of $L^2$ holomorphic functions},
  journal={Math. Z.},
  volume={195},
  date={1987},
  number={2},
  pages={197--204},
  issn={0025-5874},
  review={\MR {892051}},
  doi={10.1007/BF01166457},
}

\bib{Paun_inv-plurigenera}{article}{
  author={P\u aun, Mihai},
  title={Siu's invariance of plurigenera: a one-tower proof},
  journal={J.~Diff.~Geom.},
  volume={76},
  pages={485--493},
  date={2007},
}

\bib{Siu_inv_plurigenera2}{article}{
  author={Siu, Yum-Tong},
  title={Extension of twisted pluricanonical sections with plurisubharmonic weight and invariance of semipositively twisted plurigenera for manifolds not necessarily of general type},
  conference={ title={Complex geometry}, address={G\"{o}ttingen}, date={2000}, },
  book={ publisher={Springer, Berlin}, },
  date={2002},
  pages={223--277},
  review={\MR {1922108}},
}

\bib{Takayama_adj-ideal}{article}{
  author={Takayama, Shigeharu},
  title={Pluricanonical systems on algebraic varieties of general type},
  journal={Invent. Math.},
  volume={165},
  date={2006},
  number={3},
  pages={551--587},
  issn={0020-9910},
  review={\MR {2242627}},
  doi={10.1007/s00222-006-0503-2},
}


    }
  \end{biblist}
\end{bibdiv}
\end{document}
